\documentclass[12pt]{article}

\usepackage{authblk}

\usepackage{amsmath}
\usepackage{amsfonts}
\usepackage{amssymb}
\usepackage{amsthm}
\usepackage{mathrsfs}

\usepackage[margin=2cm]{geometry}
\usepackage{graphicx}
\usepackage{xcolor}
\usepackage{enumitem}
\usepackage{hyperref}
\usepackage{tabularx}
\usepackage{float}
\usepackage{bbm}
\usepackage{comment}
\usepackage{dsfont}
\usepackage{setspace}

\usepackage{tikz}
\usetikzlibrary{matrix}

\setlength{\parindent}{0cm}

\setlength{\abovedisplayskip}{0pt}
\setlength{\belowdisplayskip}{0pt}
\setlength{\abovedisplayshortskip}{0pt}
\setlength{\belowdisplayshortskip}{0pt} 

\hypersetup{
	colorlinks   = true,
	linkcolor=blue,
	citecolor=blue
}

\usepackage{tocloft}

\newtheorem{theorem}{Theorem}[section]
\newtheorem{lemma}[theorem]{Lemma}

\newtheorem{corollary}[theorem]{Corollary}

\newtheorem{remark}[theorem]{Remark}

\newcommand{\NN}{\mathbb{N}}

\newcommand{\RR}{\mathbb{R}}

\newcommand{\EE}{\mathbb{E}}
\newcommand{\PP}{\mathbb{P}}
\newcommand{\bbone}{\mathbbm{1}}


\newcommand{\rE}{\mathcal{E}}

\newcommand{\rP}{\mathcal{P}}

\newcommand{\ra}{\rightarrow}

\newcommand{\prt}{\partial}
\newcommand{\vp}{\varphi}

\newcommand{\bX}{\overline{X}}

\newcommand{\pM}{p_{\mathcal{M}}}
\newcommand{\con}{\mathbb{X}}



\DeclareMathOperator{\divv}{div}

\DeclareMathOperator{\id}{id}

\DeclareMathOperator{\law}{Law}
\newcommand{\block}[1]{}

\newcommand{\dcbo}{\rho}
\newcommand{\scbo}{\textbf{M}}
\newcommand{\dmf}{\overline{\rho}}
\newcommand{\dpmf}{\overline{\mu}}
\newcommand{\smf}{\overline{\textbf{M}}}
\newcommand{\diff}{\,\mathrm{d}}

\newcommand{\mc}[1]{\mathcal{#1}}

\newcommand{\norm}[1]{\left\Vert #1\right\Vert}
\newcommand{\inner}[2]{\left\langle #1,\; #2\right\rangle}

\newcommand{\abs}[1]{\left\vert #1\right\vert}


\def\XXint#1#2#3{{\setbox0=\hbox{$#1{#2#3}{\int}$ }
		\vcenter{\hbox{$#2#3$ }}\kern-.6\wd0}}

\onehalfspacing

\author[1]{Hui Huang}
\author[2]{Jethro Warnett}
\affil[1]{Department of Mathematics and Scientific Computing, University of Graz, Universitaetsplatz 3, Graz, 8020, Austria\\
Email: hui.huang1@ucalgary.ca\vspace{.3cm}}
\affil[2]{Mathematical Institute, University of Oxford, Woodstock Road, Oxford, OX2 6GG, United Kingdo,
Email: warnett@maths.ox.ac.uk}

\begin{document}
    \title{Well-posedness and mean-field limit estimate of a consensus-based algorithm for multiplayer games}
    \maketitle

    \textbf{Abstract.} Recently, the paper \cite{chenchene_consensus-based_2023} introduces a derivative-free consensus-based particle method that finds the Nash equilibrium of non-convex multiplayer games, where it proves the global exponential convergence in the sense of mean-field law. 
    This paper aims to address theoretical gaps in \cite{chenchene_consensus-based_2023}, specifically by providing a quantitative estimate of the mean-field limit with respect to the number of particles, as well as establishing the well-posedness of both the finite particle model and the corresponding mean-field dynamics.

    \bigskip

    \textbf{Keywords}: mean-field limits, Interacting particle systems, Consensus-Based Optimization, Coupling methods, Wasserstein stability estimates, Nash equilibrium

    \bigskip

    \textbf{MSC codes}: 35Q93, 65C35, 70F45, 60H30
    
    
    \section{Introduction}

   \subsection{CBO for Multiplayer games}

    Multiplayer games \cite{narahari_game_2014} are found in various fields, from  advertising \cite{maehara_budget_2015}, to neuroeconomics \cite{king-casas_understanding_2012}, and evolutionary biology \cite{gokhale_evolutionary_2014}. In computer science, they are key to machine learning \cite{nouiehed_solving_2019, deng_local_2021}, federated learning \cite{fan_fault-tolerant_2021}, adversarial learning \cite{li_triple_2017, song_multi-agent_2018}, and reinforcement learning \cite{dai_sbeed_2018, lanctot_unified_2017}. 
In game theory, it is agreed that the optimal strategies of all players are the Nash-equilibrium points \cite{nash_equilibrium_1950}, a point where no player can improve their outcome by changing strategy, assuming that the strategy of all other players is fixed. Several heuristic algorithms have been developed to find the Nash-equilibrium. Examples include the multiobjective particle swarm optimization method \cite{han_adaptive_2018, sedarous_multi-swarm_2018} (based on the particle swarm optimization algorithm \cite{wang_particle_2018}) has been used in conjunction with other algorithms, fictous play \cite{berger_browns_2007} and regret matching \cite{hart_simple_2000}.

    \smallskip

    Recently, a consensus based optimization (CBO) algorithm has been proposed to find the Nash-equilibrium of multiplayer games \cite{chenchene_consensus-based_2023}. The CBO algorithm has been originally proposed in \cite{pinnau_consensus-based_2017, carrillo_analytical_2018}, which is  inspired by collective behavior in nature, such as flocking birds or swarming fish. It is used to solve optimization problems by simulating the interaction of particles that collectively move toward an optimal solution. The agents share information and adjust their positions based on a consensus mechanism, which drives the system toward a global optimum.
    The CBO algorithm boasts many advantages such as being derivative free and being amenable to mathematical analysis, which has now been extended and adapted to address a wide variety of optimization settings. Notable extensions include global optimization on compact manifolds \cite{ha_stochastic_2022}, handling general constraints \cite{beddrich2024constrained,bungert2025mirrorcbo}, and optimizing cost functions with multiple minimizers \cite{bungert_polarized_2022}. Additionally, the CBO framework has been successfully applied to multi-objective problems \cite{borghi_adaptive_2023}, min-max problems \cite{borghi2024particle,huang2024consensus}, and high-dimensional machine learning tasks \cite{carrillo_consensus-based_2021,fornasier_consensus-based_2021}. Further advancements include the integration of momentum \cite{jingrun_chen_consensus-based_2022}, memory effects \cite{huang2025self}, as well as
    jump-diffusion processes \cite{kalise_consensus-based_2023}.
 Rather than attempting to include a necessarily incomplete account of this very fast growing field, we refer to the review paper \cite{totzeck2021trends}, and to \cite{trillos2024cb} for a more recent and relatively comprehensive report.


    \smallskip
    The authors in \cite{chenchene_consensus-based_2023} consider a multi-species CBO algorithm. We are given $M\geq 2$ players, where each player is denoted by $m\in [M]:=\{1,\ldots,M\}$. All players posses a strategy $(x_1,\ldots,x_M)\in\RR^{M\cdot d}$, where we denote the strategy of the $m$th player by $x_m$ and the strategy of the opponents $x_{-m}:=(x_1,\ldots,x_{m-1},x_{m+1}.\ldots, x_M)$. Every player minimizes their own cost function $\rE_m(x_m; x_{-m}):\RR^{M\cdot d}\ra\RR$, that depends on their current strategy and the strategy of the other players. The Nash-equilibrium of the multiplayer game is defined as any strategy $(x_1^\ast,\ldots,x_M^\ast)\in\RR^{M\cdot d}$ that satisfies, for all $m\in [M]$,
    \begin{align*}
        \rE_m(x_m^\ast;\,x_{-m}^\ast)
        \leq \rE_m(x;\,x_{-m}^\ast)
        \qquad\forall x\in \RR^d. 
    \end{align*}
    
    \smallskip
    
     Each player $m$ is attributed a collection of $N$ particles $X^{m,1},\ldots,X^{m,N}\in\RR^d$. The strategy $x_m$ is derived as the consensus of the particles
    \begin{align}\label{eq:consensus}
    	\con_\alpha^m(\dcbo^{m,N}; \scbo^{-m})
    	:=\frac{\int_{\RR^d} x_m \;\omega_\alpha^{\rE_m}(x_m;\scbo^{-m})\,\diff\dcbo^{m,N}(x_m)}{\int_{\RR^d} \omega_\alpha^{\rE_m}(x_m;\scbo^{-m})\,\diff\dcbo^{m,N}(x_m)},
    \end{align}
    where we define weight, particle distribution and player strategies by
    \begin{align*}
    	\omega_\alpha^{\rE_m}(x_m;\scbo^{-m})
    	:=e^{-\alpha \rE_m(x_m;\, \scbo^{-m})},\quad
    	\dcbo^{m,N}:=\frac{1}{N}\sum_{i=1}^N\delta_{ X^{m,i}},\quad
    	\scbo:=\frac{1}{N}\sum_{i=1}^N\left(X^{1,i},\ldots,X^{M,i}\right).
    \end{align*}
    The continuous CBO dynamics is expressed by an interacting particle evolution system represented by the following stochastic differential equation, for all $i\in [N]$ and $m\in [M]$,
    \begin{align}
        \label{eq:cbo}
            \diff X_t^{m,i}
            =-\lambda( X_t^{m,i}-\con_\alpha^m(\dcbo_t^{m,N},\scbo_t^{-m}))\diff t
            +\sigma D( X_t^{m,i}-\con_\alpha^m(\dcbo_t^{m,N},\scbo_t^{-m}))\diff B_t^{m,i},
        \tag{CBO}
    \end{align}
    where $\lambda,\sigma>0$ are drift and diffusion parameters respectively, $\{B^{m,i}\}_{m\in [M], i\in [N]}$ are independent standard Brownian motions and $D:\RR^{d}\ra \RR^{d\times d}$ is Lipshitz transformation with respect to the Frobenius norm. For example $D(X)=\mathrm{diag}(X^1,\ldots,X^d)$ (anisotropic) or $D(X)=|X|\cdot\id$ (isotropic). The paper \cite{chenchene_consensus-based_2023} formally states that the mean-field limit of the above particle dynamics is dictated by the following stochastic differential equation, for all $m\in [M]$,
    \begin{align}
        \label{eq:mf_cbo}
        \diff \bX_t^m
        =-\lambda( \bX_t^{m}-\con_\alpha^m(\dmf_t^m,\smf_t^{-m}))\diff t
        +\sigma D( \bX_t^{m}-\con_\alpha^m(\dmf_t^m,\smf_t^{-m}))\diff B_t^{m,1},
    \tag{MF CBO}
    \end{align}
    where we have the law and expectation of the particles
    \begin{align*}
    	\dmf_t^m := \law\big(\,\bX_t^{m}\big),\qquad
    	\smf_t := \EE\left[\left(\bX_t^1,\ldots,\bX_t^M\right)\right].
    \end{align*}
    Applying It\^{o}’s formula to the joint process $(\bX^1,\ldots,\bX^M)$ shows that the following non-local Fokker-Planck equation holds in the weak sense
        \begin{align*}
            \prt_t \dmf_t
            =\lambda\sum_{m=1}^M\divv_{x_m}\big[(x_m-\con_\alpha^m(\dmf_t^m, \smf_t^{-m}))\; \dmf_t\big]+\frac{\sigma^2}{2}\sum_{m=1}^M\sum_{k=1}^d\prt_{(x_m)_k}^2 \big[(x_m-\con_\alpha^m(\dmf_t^m,\smf_t^{-m}))_k^2\; \dmf_t\big],
        \end{align*}
        where $\dmf$ is the law of the joint distribution. For the analysis of the CBO-type PDE, we refer the readers to \cite{wang2025mathematical}.

\subsection{Motivation}
    The authors \cite{chenchene_consensus-based_2023} rigorously prove global convergence in the large particle limit. Specifically, they establish the convergence of the mean-field dynamics $(\overline{X}_t^1, \dots, \overline{X}_t^M)$ in \eqref{eq:mf_cbo} to the global Nash equilibrium (NE) point $(x_1^*, \dots, x_M^*)$ as $t$ approaches infinity. To achieve this, they introduce the variance functions:
\begin{equation}\label{eq:defi_V_m}
    V^m(t) = \mathbb{E}\left[|\overline{X}_t^m - x_m^*|^2\right], \quad \text{for each} \ m \in [M], \quad \text{and} \quad V(t) := \sum_{m=1}^M V^m(t), \quad \text{for} \ t > 0.
\end{equation}
By analyzing the decay behavior of the (cumulative) variance function $V(t)$, they demonstrate that $V(t)$ decays exponentially at a finite time $T_* > 0$, with a decay rate that can be controlled through the parameters of the CBO method. Specifically, it holds that
\[
    V(t) \leq V(0) \exp\left(-\frac{2\lambda - \sigma^2}{2} t\right)
\]
for $t \in [0, T_*]$, and $V(T_*) \leq \varepsilon$ for any given accuracy $\varepsilon > 0$.

    However, the justification for passing to the mean-field limit from the particle system \eqref{eq:cbo} to the mean-field dynamics \eqref{eq:mf_cbo} remains only formal, and a rigorous mathematical analysis is  absent in \cite{chenchene_consensus-based_2023}. Establishing a rigorous proof of the CBO models has been a challenging task, primarily due to the fact that the consensus point defined in \eqref{eq:consensus} is only locally Lipschitz. Several significant results regarding the mean-field limit for CBO have been achieved in recent years. For instance, \cite{fornasier_consensus-based_2020,ha_stochastic_2022} established mean-field limit estimates for variants of CBO constrained to compact manifolds by ensuring the consensus point is globally Lipschitz. Subsequently, \cite{huang_mean-field_2022} proved the mean-field limit for the standard CBO model using a compactness argument based on Prokhorov's theorem. However, this approach does not provide an explicit convergence rate in terms of the number of particles $N$. 
Further advancements were made in \cite{fornasier_consensus-based_2024,huang2023global}, where the authors demonstrated a probabilistic mean-field approximation, showing that the mean-field limit estimate holds with high probability. The high-probability assumption was later removed by \cite{gerber_mean-field_2024}, who established an improved stability estimate for the consensus point. Most recently, uniform-in-time type of mean-field estimates have been obtained \cite{huang2024uniform,gerber2025uniform,bayraktar2025uniform}.

    The primary objective of this paper is to address some theoretical gaps in \cite{chenchene_consensus-based_2023} by providing a quantitative estimate of the mean-field limit with respect to the number $N$ of particles. Additionally, we establish the well-posedness of both the finite particle model \eqref{eq:cbo} and the mean-field dynamics \eqref{eq:mf_cbo}.


    \subsection{Main Results}
    Throughout this paper we assume that the cost functions are sandwiched in between two polynomials and that the Lipshitz constants grow polynomially.
    \begin{enumerate}[label=(A\arabic*)]
        \item \label{ass:lip} There exists constants $C>0$ and $s\geq 0$ such that for all $(x_0,y_0),(x_1,y_1)\in\RR^d \times \RR^{(M-1)\cdot d}$ and $m\in [M]$ we have the local Lipshitz property
    		\begin{align*}
    			|\rE_m(x_0;y_0)-\rE_m(x_1;y_1)|
    			\leq C(1+|(x_0,y_0)|+|(x_1,y_1)|)^s\cdot |(x_0-x_1,y_0-y_1)|.
    		\end{align*}
        \item \label{ass:bnd} There exists constants $0\leq \ell$ and $c,G>0$ such that for all $m\in [M]$ we have the growth
    		\begin{align*}
    			\frac{1}{c}\,\big(|(x,y)|^\ell-G\big)
    			\leq \rE_m(x;y)
    			\leq c\, \big(|(x,y)|^\ell+G\big).
    		\end{align*}
    \end{enumerate}

    For the sake of simplicity we define the variable 
    \begin{align*}
        \pM := \begin{cases}
            2+s & \text{ if }\ell=0,\\
            1 & \text{ if }\ell>0.
        \end{cases}
    \end{align*}

    We adapt two existence results directly from \cite[Theorem 2.2, Theorem 2.3]{gerber_mean-field_2024} for the multi-species setting.

    \begin{theorem}[{Existence and uniqueness for \eqref{eq:cbo}}]
    	\label{theorem:existence_and_uniqueness_for_cbo}
        Let Assumptions \ref{ass:lip} and \ref{ass:bnd} hold. Then the SDEs \eqref{eq:cbo} posses unique strong solutions $\{X_t^{m,i}\}_{m\in [M],i\in [N]}$ for any initial conditions $\{X_0^{m,i}\}_{m\in [M],i\in [N]}$ that are independent of the Brownian motions $\{B_t^{m,i}\}_{m\in [M],i\in [N]}$. The solutions are almost surely continuous.
    \end{theorem}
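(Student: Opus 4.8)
The plan is to recast \eqref{eq:cbo} as a single It\^o SDE on $\RR^{MNd}$ for the stacked vector $\mathbf{X}_t=(X_t^{m,i})_{m\in[M],\,i\in[N]}$ and to invoke the classical well-posedness theory for SDEs with locally Lipschitz coefficients, supplemented by an a priori second-moment bound that rules out explosion. Concretely, write the drift as $b^{m,i}(\mathbf X)=-\lambda\big(X^{m,i}-\con_\alpha^m(\dcbo^{m,N},\scbo^{-m})\big)$ and the diffusion as $D\big(X^{m,i}-\con_\alpha^m(\dcbo^{m,N},\scbo^{-m})\big)$, where $\con_\alpha^m$, $\dcbo^{m,N}$, $\scbo^{-m}$ are the explicit functions of $\mathbf X$ defined in \eqref{eq:consensus}.

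\emph{Step 1 (local Lipschitz regularity).} Since $z\mapsto D(z)$ is globally Lipschitz in the Frobenius norm and $\mathbf X\mapsto\scbo$ is affine, everything reduces to showing that $\mathbf X\mapsto\con_\alpha^m(\dcbo^{m,N},\scbo^{-m})=\big(\sum_{i=1}^N X^{m,i}\,e^{-\alpha\rE_m(X^{m,i};\,\scbo^{-m})}\big)\big/\big(\sum_{i=1}^N e^{-\alpha\rE_m(X^{m,i};\,\scbo^{-m})}\big)$ is locally Lipschitz on $\RR^{MNd}$. On any ball $\{|\mathbf X|\le R\}$, Assumption \ref{ass:lip} makes $(x_m,\scbo^{-m})\mapsto\rE_m(x_m;\scbo^{-m})$ Lipschitz with a constant depending only on $R$ (and on $C,s$), so $e^{-\alpha\rE_m}$ is Lipschitz there; moreover $\rE_m$ is bounded on that ball (by \ref{ass:lip} from a fixed base point, or directly by the upper bound in \ref{ass:bnd}), hence the denominator is bounded below by a strictly positive constant $\theta(R)>0$. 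A quotient of Lipschitz functions whose denominator stays bounded away from zero is Lipschitz, so $\mathbf X\mapsto\con_\alpha^m$ is Lipschitz on $\{|\mathbf X|\le R\}$ and the full coefficients $b$ and $D(\cdot-\con_\alpha^\cdot)$ are locally Lipschitz on $\RR^{MNd}$. By the standard theorem for such SDEs, for any square-integrable initial datum independent of $\{B^{m,i}\}$ there is a unique strong solution with a.s. continuous paths up to the explosion time $\tau=\lim_{R\to\infty}\tau_R$, where $\tau_R=\inf\{t\ge0:|\mathbf X_t|\ge R\}$.

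\emph{Step 2 (no explosion).} Because the weights $e^{-\alpha\rE_m(X^{m,i};\scbo^{-m})}/\sum_j e^{-\alpha\rE_m(X^{m,j};\scbo^{-m})}$ are nonnegative and sum to one, $\con_\alpha^m$ is a convex combination of $X^{m,1},\dots,X^{m,N}$, so Jensen gives $|\con_\alpha^m(\dcbo^{m,N},\scbo^{-m})|^2\le\frac1N\sum_{i=1}^N|X^{m,i}|^2$, a bound uniform in $\alpha$ and independent of the specific $\rE_m$. Applying It\^o's formula to $|\mathbf X_{t\wedge\tau_R}|^2=\sum_{m,i}|X^{m,i}_{t\wedge\tau_R}|^2$, using $\|D(z)\|_F\le\|D(0)\|_F+L|z|$ for the diffusion term and the fact that the stochastic integral is a martingale up to $\tau_R$, then estimating $|X^{m,i}-\con_\alpha^m|\le|X^{m,i}|+|\con_\alpha^m|$ together with the convexity bound, yields $\EE\big[|\mathbf X_{t\wedge\tau_R}|^2\big]\le\EE\big[|\mathbf X_0|^2\big]+C_1t+C_2\int_0^t\EE\big[|\mathbf X_{s\wedge\tau_R}|^2\big]\diff s$ with $C_1,C_2$ independent of $R$; Gr\"onwall gives $\EE[|\mathbf X_{t\wedge\tau_R}|^2]\le(\EE[|\mathbf X_0|^2]+C_1t)e^{C_2t}$, uniformly in $R$. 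Letting $R\to\infty$ forces $\PP(\tau\le t)=0$ for every $t$, hence $\tau=\infty$ a.s., and together with Step 1 this gives the unique global strong solution with a.s. continuous paths.

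The only genuinely delicate point is the lower bound $\theta(R)>0$ on the normalising denominator in Step 1 — precisely the place where CBO-type systems lose global Lipschitz continuity — but it follows cleanly from the fact that on bounded sets $\rE_m$ is bounded, so $e^{-\alpha\rE_m}$ is bounded below. The multi-species coupling adds no essential difficulty, since the dependence on $\scbo^{-m}$ is affine in $\mathbf X$ and the whole argument lives in the finite-dimensional space $\RR^{MNd}$; in fact one may simply transcribe the proof of \cite[Theorem 2.2]{gerber_mean-field_2024}, replacing the single consensus point by the tuple $(\con_\alpha^1,\dots,\con_\alpha^M)$ and the single-species moment estimate by its sum over $m\in[M]$.
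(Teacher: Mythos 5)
Your proof takes essentially the same route as the paper: both arguments combine local Lipschitz continuity of the coefficients with sublinear growth of the consensus (hence a second-moment Lyapunov bound) to rule out finite-time explosion; the paper packages the non-explosion step as an appeal to Khasminskii's criterion \cite[Theorem 3.5]{khasminskii_stochastic_2012} and obtains local Lipschitzness via Corollary~\ref{cor:loc_lip_prop}, whereas you verify the Lipschitz property directly (denominator bounded away from zero on balls) and run the It\^o--Gr\"onwall moment estimate by hand, which is equivalent. One small slip in Step~2: since the weights $e^{-\alpha\rE_m(X^{m,i};\,\scbo^{-m})}/\sum_j e^{-\alpha\rE_m(X^{m,j};\,\scbo^{-m})}$ are not uniform, Jensen only yields $|\con_\alpha^m|^2\le\sum_i w_i|X^{m,i}|^2\le\max_i|X^{m,i}|^2\le\sum_i|X^{m,i}|^2$, not $\frac{1}{N}\sum_i|X^{m,i}|^2$; for fixed $N$ this only changes the constant $C_2$ in your Gr\"onwall bound and the conclusion is unaffected.
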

    
    \begin{theorem}[{Existence and uniqueness for \eqref{eq:mf_cbo}}]
    	\label{theorem:existence_and_uniqueness_for_mfcbo}
        Let Assumptions \ref{ass:lip} and \ref{ass:bnd} hold and $p\geq 2\vee \pM$. Then, for all $T > 0$ and $\dmf_0^1,\ldots,\dmf_0^M\in\rP_p(\RR^d)$, there exists unique processes $\{\bX^m:\Omega\ra C^0([0,T], \RR^d)\}_{m\in [M]}$ satisfying \eqref{eq:mf_cbo} in the strong sense with initial condition $\law(\bX_0^m)= \dmf_0^m$. Furthermore, we have the bounds
        \begin{align}\label{eq:mbounds}
            \begin{split}
                \sup_{m\in[M]}\EE \left[ 
                \sup_{\substack{t \in [0, T]}} 
                \;|\bX_t^m|^p 
                \right] < \infty, \quad & \raisebox{.4cm}{$\displaystyle\sup_{\substack{t \in [0, T] \\ m\in [M]}}$}\; |\con_\alpha^m(\dmf_t^m,\smf_t^{-m})| < \infty,\\
                &\quad\dmf_t^m = \law\big(\,\bX_t^m\big),\quad
                \smf_t= \EE\big[(\bX_t^1,\ldots,\bX_t^M)\big]
            \end{split}
        \end{align}
        and the function $t \mapsto \con_\alpha^m(\dmf_t^m,\smf_t^{-m})$ is continuous over $[0, T]$.
    \end{theorem}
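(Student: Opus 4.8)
System \eqref{eq:mf_cbo} is a family of $M$ coupled McKean--Vlasov SDEs, the coupling running through the marginal laws $\dmf_t^m=\law(\bX_t^m)$ and through the shared mean $\smf_t=\EE[(\bX_t^1,\ldots,\bX_t^M)]$. I would establish existence and uniqueness by a fixed-point argument on probability measures over path space, carried out simultaneously over all species --- this is the strategy of \cite[Theorem 2.3]{gerber_mean-field_2024}, and since the construction there is species-by-species, the only genuinely new bookkeeping is that the mean $\smf$ ties the $M$ equations together, so the fixed point must live on the product path space $C^0([0,T],\RR^{Md})\cong\prod_{m=1}^M C^0([0,T],\RR^d)$. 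Fix $T>0$ and $p\geq 2\vee\pM$. On $\rP_p\big(C^0([0,T],\RR^{Md})\big)$, equipped with a Wasserstein distance built from the sup-in-time cost, consider the closed subset $\rC$ of measures whose $m$-th block has time-zero marginal $\dmf_0^m$ for all $m$ and which satisfy $\EE_\mu\big[\sup_{t\le T}|x_t|^p\big]\le K$; the constant $K$ (possibly together with the length of the time interval) is fixed at the end so that the solution map below is a self-map, and $\rC$ is then a complete metric space.

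\textbf{Freezing the consensus point.} Given $\mu\in\rC$ write $\mu_t^m$ for the time-$t$ marginal of its $m$-th block, $\scbo_t^\mu:=\int x_t\diff\mu(x)$, and freeze $b_t^m:=\con_\alpha^m(\mu_t^m,(\scbo_t^\mu)^{-m})$. Assumption \ref{ass:bnd} controls $b^m$: for $\ell=0$ it bounds the weights $\omega_\alpha^{\rE_m}$ from above and below by positive constants, and for $\ell>0$ the super-polynomial decay of the weights dominates the linear factor in the numerator; combined with the moment bound defining $\rC$ and a lower bound on the normalizing integrals obtained by restricting to a large ball, this gives $\sup_{t\le T}|b_t^m|<\infty$, and $W_p$-continuity of $t\mapsto\mu_t^m$ on $\rC$ together with the stability estimate below gives continuity of $t\mapsto b_t^m$. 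With each $b^m$ now a fixed continuous function of time, the scalar SDE
\begin{align*}
    \diff Y_t^m=-\lambda(Y_t^m-b_t^m)\diff t+\sigma D(Y_t^m-b_t^m)\diff B_t^{m,1}
\end{align*}
has coefficients globally Lipschitz in the state with at most linear growth, hence a unique strong, almost surely continuous solution with $\law(Y_0^m)=\dmf_0^m$; set $\mathcal{T}(\mu):=\law\big((Y^1,\ldots,Y^M)\big)$. An It\^o estimate for $|Y_t^m|^p$ together with the Burkholder--Davis--Gundy inequality and Gr\"onwall's lemma controls $\EE[\sup_{t\le\tau}|Y_t^m|^p]$ in terms of the initial moments, $\tau$, the parameters, and $\sup|b^m|$, and one checks --- taking $K$ large and, if the coercive case $\ell>0$ demands it, the horizon short and then concatenating --- that $\mathcal{T}:\rC\to\rC$.

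\textbf{Contraction via stability of the consensus point.} The crucial estimate is the local Lipschitz continuity of the (only locally Lipschitz) consensus map over the moment-bounded class: for $\mu,\nu\in\rC$,
\begin{align*}
    \big|\con_\alpha^m(\mu_t^m,(\scbo_t^\mu)^{-m})-\con_\alpha^m(\nu_t^m,(\scbo_t^\nu)^{-m})\big|
    \leq L\Big(W_p(\mu_t^m,\nu_t^m)+|\scbo_t^\mu-\scbo_t^\nu|\Big),
\end{align*}
with $L$ depending on $K,\alpha,T$ and the constants in \ref{ass:lip}--\ref{ass:bnd}. One proves it by writing the difference of the two normalized averages over a common coupling and estimating numerator and denominator separately: \ref{ass:lip} furnishes the Lipschitz modulus of $\rE_m$, hence of $\omega_\alpha^{\rE_m}$, with polynomial growth of degree $s$; \ref{ass:bnd} gives the uniform lower bound on the denominators; and the hypothesis $p\geq 2\vee\pM$ is exactly what makes the relevant integrals finite and controlled --- when $\ell=0$ the polynomial degree $1+s$ in the Lipschitz modulus of the weight must be absorbed by the H\"older conjugate of $p$, forcing $p\geq 2+s$, whereas when $\ell>0$ the fast decay of the weight makes higher moments superfluous and $p\geq 2$ (needed anyway, e.g., to define $\smf$) suffices. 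Feeding this estimate into an It\^o--Gr\"onwall comparison of the two frozen solutions driven by the same Brownian motions, summing over $m$, and dominating $W_p(\mu_t^m,\nu_t^m)$ and $|\scbo_t^\mu-\scbo_t^\nu|$ by the path-space distance, shows $\mathcal{T}$ is a contraction on a short interval $[0,\tau_0]$; since $K$ is preserved, the per-step constants are uniform and concatenation over $\lceil T/\tau_0\rceil$ subintervals yields a unique fixed point on $[0,T]$.

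\textbf{Conclusion and the main difficulty.} The fixed point $\mu^*$ of $\mathcal{T}$ together with the associated frozen solution gives processes $(\bX^1,\ldots,\bX^M)$ solving \eqref{eq:mf_cbo} in the strong sense with $\dmf_t^m=\law(\bX_t^m)$ and $\smf_t=\EE[(\bX_t^1,\ldots,\bX_t^M)]$; uniqueness among processes with finite $p$-th moments follows from uniqueness of the fixed point, any such solution inducing a law in $\rC$ for $K$ large. The displayed bound $\sup_m\EE[\sup_{t\le T}|\bX_t^m|^p]<\infty$ is the defining property of $\rC$, and boundedness and continuity of $t\mapsto\con_\alpha^m(\dmf_t^m,\smf_t^{-m})$ on $[0,T]\times[M]$ follow from the stability estimate, path continuity of $\bX^m$, and dominated convergence. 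I expect the main obstacle to be the stability estimate and the accompanying moment control: securing the lower bound on the normalizing integrals uniformly over $\rC$ (where \ref{ass:bnd} and the threshold on $p$ enter), closing the moment estimate in the coercive case $\ell>0$, and threading both through the inter-species mean $\smf$ --- the last being the one feature genuinely absent from the single-species treatment in \cite{gerber_mean-field_2024}.
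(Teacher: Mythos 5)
Your proposal is correct, but it follows a genuinely different route from the paper. The paper (following \cite{gerber_mean-field_2024,carrillo_analytical_2018}) runs a Leray--Schauder fixed-point argument on the deterministic space $C^0([0,T],\RR^d)^{\otimes M}$ of frozen consensus \emph{trajectories}: the map $\mathcal{T}$ sends a candidate $u=(u^1,\dots,u^M)$ to the consensus of the laws of the linear SDEs with drift $u^m$; continuity comes from the Wasserstein stability of Lemma~\ref{lem:wasserstein_stability_estimate_for_weighted_mean}, compactness from a H\"older-$1/2$ estimate and Arzel\`a--Ascoli, and the a priori bound on the Leray--Schauder family $\{u:u=\xi\mathcal{T}(u),\ \xi\in[0,1]\}$ from the \emph{self-consistent} moment estimate (Lemma~\ref{lemma:moment_estimates_for_the_empirical_measures} with Remark~\ref{remark:add_xi_to_cbo}), which closes because Lemma~\ref{lemma:est_mean_cost} bounds the consensus sublinearly by the moments of the solution itself. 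You instead propose a Sznitman-style Banach iteration on $\rP_p\big(C^0([0,T],\RR^{Md})\big)$ with an explicit moment cap $K$, a short-time contraction, and concatenation. What Leray--Schauder buys is that one never has to calibrate $K$ against a contraction constant or shorten the horizon --- compactness stands in for contractivity and the a priori bound is self-closing. What your Banach route buys is a more elementary argument (no topological degree, no Arzel\`a--Ascoli) in which uniqueness falls out of the same estimate, at the price of the bookkeeping you flag: choosing $K$ and $\tau_0$ so that $\mathcal{T}(\rC)\subseteq\rC$ (the moment estimate for the frozen SDE scales with $\sup_t|b^m_t|\lesssim K^{1/p}$, so one must take $\tau_0$ small enough that the output does not overshoot $K$), and then re-validating the choice across $\lceil T/\tau_0\rceil$ subintervals as the cap may grow step to step --- all perfectly standard, but more delicate than the topological argument. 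One small caveat: the extra difficulty you anticipate for $\ell>0$ in the moment estimate is not real; Lemma~\ref{lemma:est_mean_cost} controls the consensus by $p$-moments uniformly in $\ell$, so the moment loop closes identically in both cases, and $\ell$ only enters through the denominator lower bound and the threshold $\pM$, exactly where you place it in the stability step.
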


\block{
        Finally, for the joint distribution $\dmf_t=\law(\bX_t^1,\ldots,\bX_t^M)$, the function $t\mapsto \dmf_t$ belongs to $C^0([0,T],\rP_p(\RR^{M\cdot d}))$ and satisfies the following non-local Fokker-Planck equation in the weak sense
        \begin{align*}
            \prt_t \dmf_t
            =\lambda\sum_{m=1}^M\divv_{x_m}\big[(x_m-\con_\alpha^m(\dmf_t^m, \smf_t^{-m}))\; \dmf_t\big]+\frac{\sigma^2}{2}\sum_{m=1}^M\sum_{k=1}^d\prt_{(x_m)_k}^2 \big[(x_m-\con_\alpha^m(\dmf_t^m,\smf_t^{-m}))_k^2\; \dmf_t\big].
        \end{align*}}

    We also close the gap in \cite{gerber_mean-field_2024}, by showing a quantitative mean-field limit estimate.
    
    \begin{theorem}[{Mean-field limit of \eqref{eq:cbo}}]
        \label{theorem:meanfield_of_cbo}
    	Let Assumptions \ref{ass:lip} and \ref{ass:bnd} hold with $q\geq 4\vee 2\pM$, $p\in (0,\frac{q}{2}]$ and $\{\rho_0^m\}_{m\in [M]}\subseteq\rP_q(\RR^d)$. We assume the particles $\{X^{m,i}\}_{m\in [M],i\in[N]}$ satisfy \eqref{eq:cbo} and $\{\bX^{m,i}\}_{m\in [M],i\in[N]}$ are $N$ i.i.d. samples for each player from \eqref{eq:mf_cbo}. They both use the same standard Brownian motions $\{B^{m,i}\}_{m\in [M],i\in[N]}$, with the same initial condition $\law(X_0^{m,i})=\law(\bX_0^{m,i})=\rho_0^m$. Then for each time $T>0$, there exists a positive constant $C>0$ independent of $N$ such that
        \begin{align*}
        \raisebox{.35cm}{$\displaystyle\sup_{\substack{m\in [M]\\i\in [N] }}$}
    		\;\left(\EE\left[\sup_{t\in [0,T]}\;\abs{X_t^{m,i}- \bX_t^{m,i}}^p\right]\right)^{\frac{1}{p}}\leq C N^{-\gamma},
    	\end{align*}
        where we define the exponent
        \begin{align*}
            \gamma:=\min\left\{\frac{1}{2},\; \frac{q-p}{2p^2},\; \frac{q-(2\vee \pM)}{2 (2\vee \pM)^2}\right\}.
        \end{align*}
    \end{theorem}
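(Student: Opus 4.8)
The plan is to run a synchronous-coupling argument and close it with Gr\"onwall's inequality, adapting to the multi-species setting the strategy developed for standard CBO in \cite{gerber_mean-field_2024}; the genuinely new feature is that each player's consensus point also depends on the opponents' barycentre $\scbo^{-m}$, so that the players' errors are coupled and the Gr\"onwall argument must be run on the maximum over $m\in[M]$. Since by hypothesis the two systems use the same Brownian motions and the same initial data, the differences $Z^{m,i}_t:=X^{m,i}_t-\bX^{m,i}_t$ are well defined, and by exchangeability within each species it suffices to bound $e_N(t):=\max_{m\in[M]}\EE\big[\sup_{s\le t}|Z^{m,1}_s|^{p}\big]$. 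A preliminary reduction shows it is enough to treat $p\ge 2\vee\pM$: replacing $p$ by $p_0:=p\vee(2\vee\pM)$ still gives $p_0\le q/2$ and does not change $\gamma$ — since $r\mapsto(q-r)/(2r^2)$ is decreasing, for $p\le 2\vee\pM$ the term $(q-p)/(2p^2)$ already dominates $(q-(2\vee\pM))/(2(2\vee\pM)^2)$ — and the estimate for $p$ follows from the one for $p_0$ by Jensen's inequality. Henceforth $x\mapsto|x|^p$ is $C^2$.

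\textbf{Moment bounds and the Gr\"onwall skeleton.} First I would record the uniform-in-$N$ moment estimates $\sup_N\sup_m\EE[\sup_{t\le T}|X^{m,1}_t|^{q}]<\infty$ (and likewise for $\bX^{m,1}_t$, which is contained in Theorem~\ref{theorem:existence_and_uniqueness_for_mfcbo}), hence $\Lambda:=\sup_N\sup_m\EE[\sup_{t\le T}|Z^{m,1}_t|^{q}]<\infty$, together with $L^q$-bounds for the consensus points $\con_\alpha^m(\dcbo^{m,N}_t;\scbo^{-m}_t)$ and $\con_\alpha^m(\dmf^m_t;\smf^{-m}_t)$; these are obtained exactly as in the proofs of Theorems~\ref{theorem:existence_and_uniqueness_for_cbo}--\ref{theorem:existence_and_uniqueness_for_mfcbo} and \cite{gerber_mean-field_2024}, the drift being linearly dissipative in $X^{m,i}_t$ once the consensus point is controlled by the $(2\vee\pM)$-th moments. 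Subtracting the two SDEs and using the Lipschitz continuity of $D$, the diffusion coefficient of $Z^{m,i}$ is bounded by $\sigma L_D(|Z^{m,i}_t|+|\Delta^m_t|)$, where $\Delta^m_t:=\con_\alpha^m(\dcbo^{m,N}_t;\scbo^{-m}_t)-\con_\alpha^m(\dmf^m_t;\smf^{-m}_t)$. Applying It\^o's formula to $|Z^{m,i}_t|^p$, the dissipative drift $-\lambda Z^{m,i}_t$ produces the good term $-\lambda p|Z^{m,i}_t|^p\le 0$, while the remaining drift and the It\^o correction are dominated by $Cp\big(|Z^{m,i}_t|^{p-1}|\Delta^m_t|+|Z^{m,i}_t|^{p-2}(|Z^{m,i}_t|+|\Delta^m_t|)^2\big)$. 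A Burkholder--Davis--Gundy bound on the stochastic integral, followed by Young's inequality (to absorb $\tfrac12\EE[\sup_{s\le t}|Z^{m,i}_s|^p]$ on the left and to trade the mixed powers for $\varepsilon|Z^{m,i}_s|^p+C_\varepsilon|\Delta^m_s|^p$), yields
\begin{align*}
\EE\Big[\sup_{s\le t}|Z^{m,i}_s|^p\Big]\le C\int_0^t\EE\Big[\sup_{u\le s}|Z^{m,i}_u|^p\Big]\diff s+C\int_0^t\EE\big[|\Delta^m_s|^p\big]\,\diff s.
\end{align*}

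\textbf{Analysis of the consensus-point error.} The crux is to bound $\EE[|\Delta^m_s|^p]$ by $Ce_N(s)+CN^{-p\gamma}$. Introducing the empirical measure $\dmf^{m,N}_s:=\tfrac1N\sum_j\delta_{\bX^{m,j}_s}$ of the mean-field copies and their empirical barycentre $\smf^{N}_s:=\tfrac1N\sum_j(\bX^{1,j}_s,\dots,\bX^{M,j}_s)$, split $\Delta^m_s=\Delta^{m,\mathrm{s}}_s+\Delta^{m,\mathrm{f}}_s$ with
\begin{align*}
\Delta^{m,\mathrm{s}}_s:=\con_\alpha^m(\dcbo^{m,N}_s;\scbo^{-m}_s)-\con_\alpha^m(\dmf^{m,N}_s;\smf^{-m,N}_s),\qquad
\Delta^{m,\mathrm{f}}_s:=\con_\alpha^m(\dmf^{m,N}_s;\smf^{-m,N}_s)-\con_\alpha^m(\dmf^m_s;\smf^{-m}_s).
\end{align*}
For the stability term $\Delta^{m,\mathrm{s}}_s$ I would invoke the quantitative stability estimate for $\con_\alpha^m$ from \cite{gerber_mean-field_2024} — whose point is to control the normalising denominator $\int e^{-\alpha\rE_m}\diff\mu$ without a pointwise lower bound — in a form such as $|\con_\alpha^m(\mu;y)-\con_\alpha^m(\nu;\bar y)|\le C\,\Theta(\mu,\nu,y,\bar y)\big(W_{2\vee\pM}(\mu,\nu)+|y-\bar y|\big)$, with $\Theta$ a polynomial expression in the $(2\vee\pM)$-th moments of $\mu,\nu$ and in $|y|,|\bar y|$; the dependence on the opponents' strategy is absorbed through the Lipschitz bound of Assumption~\ref{ass:lip} on $\rE_m$. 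Estimating the coupled distances by $W_{2\vee\pM}(\dcbo^{m,N}_s,\dmf^{m,N}_s)\le\big(\tfrac1N\sum_j|Z^{m,j}_s|^{2\vee\pM}\big)^{1/(2\vee\pM)}$ and $|\scbo^{-m}_s-\smf^{-m,N}_s|\le\tfrac1N\sum_{k\ne m}\sum_j|Z^{k,j}_s|$, the key observation is that the leading part of $\Theta$ is a \emph{deterministic} constant (the moments of $\dmf^m_s$ are deterministic), the remaining fluctuation of the empirical moments being itself $o(1)$ and paired with the already-small factor $|Z|$; together with Step~1 and exchangeability this produces $\EE[|\Delta^{m,\mathrm{s}}_s|^p]\le Ce_N(s)+CN^{-p\gamma}$, i.e. the contribution is linear in $e_N(s)$ up to a lower-order source. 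For the fluctuation term $\Delta^{m,\mathrm{f}}_s$, one first conditions on the opponents to replace $\smf^{-m,N}_s$ by $\smf^{-m}_s$ up to the i.i.d. average error $|\smf^{-m,N}_s-\smf^{-m}_s|$, after which $\con_\alpha^m(\dmf^{m,N}_s;\smf^{-m}_s)-\con_\alpha^m(\dmf^m_s;\smf^{-m}_s)$ is treated by the quantitative law-of-large-numbers estimate of \cite{gerber_mean-field_2024}: expand the ratio into numerator and denominator fluctuations of genuine i.i.d. sums, bound them by a Marcinkiewicz--Zygmund/Rosenthal inequality, and control the polynomially rare event on which the empirical denominator falls below half of its deterministic limit by a Markov bound. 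Carrying the moment bookkeeping through Assumptions~\ref{ass:lip}--\ref{ass:bnd} with only $q$-th moments available — the H\"older splittings between the polynomial moment factors (of degree governed by $s$ and $2\vee\pM$) and the $N^{-1/2}$ fluctuations, and the Markov estimate on the rare denominator event — is precisely what forces the Monte-Carlo rate $N^{-1/2}$ to degrade to $N^{-\gamma}$ with the stated exponent, giving $\EE[|\Delta^{m,\mathrm{f}}_s|^p]\le CN^{-p\gamma}$.

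\textbf{Conclusion and main obstacle.} Combining, $e_N(t)\le C\int_0^t e_N(s)\,\diff s+CN^{-p\gamma}$ on $[0,T]$, so Gr\"onwall's inequality yields $e_N(T)\le Ce^{CT}N^{-p\gamma}$, that is $\sup_{m,i}\big(\EE[\sup_{t\le T}|Z^{m,i}_t|^p]\big)^{1/p}\le CN^{-\gamma}$; the case $0<p\le q/2$ then follows from the first paragraph. I expect the main obstacle to be the consensus-point analysis of the third step: proving the stability and law-of-large-numbers estimates for $\con_\alpha^m$ with the merely locally Lipschitz, only polynomially integrable weights $e^{-\alpha\rE_m}$ — in particular handling the normalising denominator without a high-probability assumption, in the spirit of \cite{gerber_mean-field_2024} — while simultaneously accommodating the new dependence on $\scbo^{-m}$ (which forces all estimates to be carried with the joint moments of the $M$ species), and doing the moment bookkeeping carefully enough both to recover exactly the exponent $\gamma$ and to keep the Gr\"onwall inequality linear despite the random empirical moment factors appearing in $\Theta$.
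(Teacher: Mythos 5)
Your skeleton — synchronous coupling, decomposition of the consensus error into a stability piece and a law-of-large-numbers piece, Gr\"onwall to close — is the same as the paper's, and your reduction to $p\ge 2\vee\pM$ via Jensen matches step one of the paper exactly. Two places differ, one cosmetically, one substantively.

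The cosmetic difference is the pivot of your decomposition. You pass through $\con_\alpha^m(\dmf^{m,N}_s;\smf^{-m,N}_s)$, with \emph{both} the measure and the opponents' barycentre replaced by their empirical versions, and then have to ``condition on the opponents'' to make the second argument deterministic before invoking the quantitative LLN. The paper instead pivots through $\con_\alpha^m(\dpmf^{m,N}_t,\smf^{-m}_t)$, keeping the \emph{true} barycentre $\smf^{-m}_t$ (which is deterministic and bounded by Theorem~\ref{theorem:existence_and_uniqueness_for_mfcbo}) in the second slot. This is exactly the hypothesis $|Y|\le R$ under which Lemma~\ref{lemma:convergence_of_the_weighted_mean_for_iid_samples} is stated and proved, so the paper's choice makes the LLN term a direct application with no conditioning and no extra rare event for the empirical barycentre. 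Your version works in principle but manufactures an avoidable complication.

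The substantive gap is in the treatment of $\Delta^{m,\mathrm{s}}$. The stability estimate, Lemma~\ref{lem:wasserstein_stability_estimate_for_weighted_mean}, gives a Lipschitz constant $C=C(R)$ valid only when one family of measures lies in $\rP_{p,R}$; for the empirical measures $\dpmf^{m,N}_s$ (or $\dmf^{m,N}_s$ in your notation) the $p$-moments are random, so there is no pathwise $R$. Your argument that ``the leading part of $\Theta$ is a deterministic constant'' because ``the moments of $\dmf^m_s$ are deterministic'' is not correct as written: what enters the constant are the moments of the empirical measure, not of its deterministic limit, and these are only close in expectation, not almost surely. The paper resolves this with an explicit excursion-set argument: it introduces $\Omega_t:=\bigcup_m\{\tfrac1N\sum_i|\bX^{m,i}_t|^p\ge R\}$, bounds $\PP(\Omega_t)\le CN^{-q/(2p)}$ using the $q$-th moment bound and \cite[Lemma~2.5]{gerber_mean-field_2024}, applies the stability lemma on $\Omega\setminus\Omega_t$ (where the moments are bounded by $R$), and on $\Omega_t$ falls back to a crude $L^q$ bound on the consensus points together with H\"older. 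This excursion split is precisely what produces the exponent $\tfrac{q-p}{2p}$ in the rate, so it is not an implementation detail you can wave away — it is where $\gamma$ actually comes from. You correctly flag ``handling the normalising denominator without a high-probability assumption'' as the main obstacle, but in this proposal the mechanism for overcoming it is left unsubstantiated.
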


    \begin{remark}
        We achieve Monte-Carlo convergence rate, this means $\gamma=\frac{1}{2}$ and $p=2$, whenever the following lower bound is satisfied $q\geq 6\vee \big((2\vee \pM)+(2\vee \pM)^2\big)$.
    \end{remark}

    \subsection{Notation}

    In this section we ferment the notation that we use throughout the paper. For any positive values $1<p<\infty$ and $R>0$ we let $\rP_{p,R}(\RR^d)$ (or $\rP_p(\RR^d)$) denote the space of probability measures with $p$-moment bounded by $R$ (or finite $p$-moment), that means
    \begin{align*}
        \int_{\RR^d}|x|^p\diff\mu(x)\leq R,\qquad
        \int_{\RR^d}|x|^p\diff\nu(x)<\infty,
        \qquad\forall \mu\in \rP_{p,R}(\RR^d) \quad \forall \nu\in \rP_p(\RR^d).
    \end{align*}
    Additionally, we define the exptected value of every probabiliy measure $\mu\in\rP_p(\RR^d)$ by
    \begin{align*}
        \EE[\mu]:=\int_{\RR^d} x\diff \mu(x).
    \end{align*}
    Finally, for any vector $x\in\RR^d$ we let $\delta_x$ denote the Dirac measure at $x$.
    
    \subsection{Paper structure}
        \label{section:particle_stab_weighted_mean}
        The paper will be subdivided into four sections. First in section \ref{section:necessary_lemmas} we state several necessary lemmas for the paper. Then, in section \ref{section:wellposedness_for_cbo}, \ref{section:wellposedness_for_mfcbo} and \ref{section:meanfield_limit_for_cbo} we prove the Theorems \ref{theorem:existence_and_uniqueness_for_cbo}, \ref{theorem:existence_and_uniqueness_for_mfcbo} and \ref{theorem:meanfield_of_cbo} respectively.

    \section{Necessary Lemmas}
    \label{section:necessary_lemmas}

    In this section we adapt several results from \cite{gerber_mean-field_2024, carrillo_analytical_2018} to the multi-species setting. We will use these to prove Theorems \ref{theorem:existence_and_uniqueness_for_cbo}, \ref{theorem:existence_and_uniqueness_for_mfcbo} and \ref{theorem:meanfield_of_cbo}. We state a Wasserstein stability result of the consensus in section \ref{subsection:wasserstein:stability_estimate_for_weighted_mean}, we derive an upper bound of the consensus in section \ref{section:bound_on_weighted_moments}, we control the moment of the solution to \eqref{eq:cbo} in section \ref{section:moment_estimate_for_cbo_dynamics} and finally we derive a mean-field limit result in section \ref{section:meanfield_limit_of_consensus_for_mfcbo}.

    \block{
    \subsection{Properties of Consensus}
    
    The following Lemma is adapted from \cite[Lemma 2.1]{carrillo_analytical_2018}
    \begin{lemma}[Local Lipshitz Continuity]
        We define the function $F_N^{m,i}:\RR^{d\cdot N\cdot M}\ra \RR$ by 
        \begin{align*}
            F^{m,i}(X^1,\ldots,X^M)
            &=\sum_{j=1}^N \frac{\omega_\alpha^{\rE_m}(X^{m,j},Z^{-m})}{\sum_{j=1}^N \omega_\alpha^{\rE_m}(X^{m,j},Z^{-m})}\cdot (X^{m,i}-X^{m,j}),
        \end{align*}
        where $X^1,\ldots,X^M\in \RR^{d\cdot N}$ and 
        \begin{align*}
            Z^j := \frac{X^{j,1}+\ldots+X^{j,N}}{N}.
        \end{align*}
        Then for all $R>0$, there exists a constant $C>0$ depending on $R$ and $N$ such that
        \begin{align}
            \label{eq:loc_lip_cons}
            |F_N^{m,i}(X)-F_N^{m,i}(Y)|\leq C |X-Y|
            \qquad\forall\; |X|,|Y|\leq R.
        \end{align}
        Additionally, we have the sublinear growth
        \begin{align}
            \label{eq:sub_lin_cons}
            \left\vert \sum_{j=1}^N \frac{\omega_\alpha^{\rE_m}(X^{m,j},Z^{-m})}{\sum_{j=1}^N \omega_\alpha^{\rE_m}(X^{m,j},Z^{-m})}\cdot X^{m,j}\right\vert
            \leq |X|.
        \end{align}
    \end{lemma}
    \begin{proof}
        First we rewrite the difference of the function
        \begin{align}
            &F_N^{m,i}(X^1,\ldots,X^m)-F_N^{m,i}(Y^1,\ldots,Y^m)\notag \\
            &\qquad=\sum_{\substack{j=1 \\ j\neq i}}^N \frac{\omega_\alpha^{\rE_m}(X^{m,j}; Z^{-m})}{\sum_{j=1}^N \omega_\alpha^{\rE_m}(X^{m,j},Z^{-m})}\cdot \Big((X^{m,i}-X^{m,j})-(Y^{m,i}-Y^{m,j})\Big)\tag{I}\label{eq:tag1}\\ 
            &\qquad\qquad +\sum_{\substack{j=1 \\ j\neq i}}^N \frac{\omega_\alpha^{\rE_m}(X^{m,j}; Z^{-m})-\omega_\alpha^{\rE_m}(Y^{m,j}; W^{-m})}{\sum_{j=1}^N \omega_\alpha^{\rE_m}(X^{m,j},Z^{-m})}\cdot (Y^{m,i}-Y^{m,j})\tag{II}\label{eq:tag2}\\
            &\qquad\qquad +\sum_{\substack{j=1 \\ j\neq i}}^N \frac{\sum_{j=1}^N \omega_\alpha^{\rE_m}(X^{m,j},Z^{-m})-\omega_\alpha^{\rE_m}(Y^{m,j},W^{-m})}{\sum_{j,k=1}^N \omega_\alpha^{\rE_m}(X^{m,j},Z^{-m})\cdot \omega_\alpha^{\rE_m}(Y^{m,j},W^{-m})}\cdot \omega_\alpha^{\rE_m}(Y^{m,k}; W^{-m})\cdot (Y^{m,i}-Y^{m,j}).\tag{III}\label{eq:tag3}
        \end{align}
        It now suffices to bound each term individually. For \ref{eq:tag1} we derive an immediate Lipshitz bound
        \begin{align*}
            |\ref{eq:tag1}|
            \leq \sum_{j=1}^N |X^{m,j}-Y^{m,j}|.
        \end{align*}
        For the other two we first derive the bound
        \begin{align*}
            |\ref{eq:tag2}|
            &\leq \frac{2R\; e^{\alpha c_u (1+R)^u}}{N} \sum_{\substack{j=1 \\ j\neq i}}^N |\omega_\alpha^{\rE_m}(X^{m,j}; Z^{-m})-\omega_\alpha^{\rE_m}(Y^{m,j}; W^{-m})|,\\
            |\ref{eq:tag3}|
            &\leq \frac{2R\; e^{2 \alpha c_u(1+R)^u-\alpha c_\ell}}{N}\sum_{k=1}^N |\omega_\alpha^{\rE_m}(X^{m,k},Z^{-m})-\omega_\alpha^{\rE_m}(Y^{m,k},W^{-m})|.
        \end{align*}
        Using the Assumption \ref{ass:lip} and \ref{ass:bnd}, there exists a constant $C>0$ depending on $R$ and $N$ such that 
        \begin{align*}
            |\ref{eq:tag2}|+|\ref{eq:tag3}|
            &\leq C \sum_{k=1}^N |(X^{m,k}-Y^{m,k},Z^{-m},W^{-m})|
            \leq C \sum_{k=1}^N |X^{m,k}-Y^{m,k}|+CN |Z^{-m}-W^{-m}|\\
           &\leq C \sum_{k=1}^N |X^{m,k}-Y^{m,k}|+C \sum_{k=1}^M \sum_{j=1}^N|X^{k,j}-Y^{k,j}|.
        \end{align*}
        If we combine all terms, then we have shown \eqref{eq:loc_lip_cons}. The last claim \eqref{eq:sub_lin_cons} follows immediately from a simple computation
        \begin{align*}
            \left\vert \sum_{j=1}^N \frac{\omega_\alpha^{\rE_m}(X^{m,j},Z^{-m})}{\sum_{j=1}^N \omega_\alpha^{\rE_m}(X^{m,j},Z^{-m})}\cdot X^{m,j}\right\vert
            &\leq \sum_{j=1}^N \frac{\omega_\alpha^{\rE_m}(X^{m,j},Z^{-m})}{\sum_{j=1}^N \omega_\alpha^{\rE_m}(X^{m,j},Z^{-m})}\cdot |X^{m,j}|\\
            &\leq \sum_{j=1}^N \frac{\omega_\alpha^{\rE_m}(X^{m,j},Z^{-m})}{\sum_{j=1}^N \omega_\alpha^{\rE_m}(X^{m,j},Z^{-m})}\cdot |X^m|
            =|X^m|.
        \end{align*}
    \end{proof}
    }

    \subsection{Wasserstein stability estimate for weighted mean}
    \label{subsection:wasserstein:stability_estimate_for_weighted_mean}
    
    \begin{lemma}
    	\label{lem:wasserstein_stability_estimate_for_weighted_mean}
    	Suppose that Assumptions \ref{ass:lip} and \ref{ass:bnd} hold. Then for all $R>0$ and $p\geq p_{\mc M}$, there exists a constant $C>0$ depending on $R$, $M$ and $p$ such that for all $\mu_1,\ldots,\mu_M\in\rP_{p,R}(\RR^d)$, $\nu_1,\ldots,\nu_M\in \rP_p(\RR^d )$ we have, for all $m\in[M]$,
    	\begin{align*}
    		|\con_\alpha^m(\mu_m, \boldsymbol{\bar \mu}^{-m})-\con_\alpha^m(\nu_m,\boldsymbol{\bar \nu}^{-m})|
    		\leq C \sum_{j=1}^M W_p(\mu_j,\nu_j),
    	\end{align*}
    	where we define the expectations
        \begin{align*}
            \boldsymbol{\bar \mu}^{-m}&:=(\EE[\mu_1],\ldots,\EE[\mu_{m-1}],\EE[\mu_{m+1}]\ldots,\EE[\mu_M]),\\
            \boldsymbol{\bar \nu}^{-m}&:=(\EE[\nu_1],\ldots,\EE[\nu_{m-1}],\EE[\nu_{m+1}]\ldots,\EE[\nu_M]).
        \end{align*} 
    \end{lemma}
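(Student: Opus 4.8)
The plan is to decompose the difference of the two consensus points into a numerator/denominator comparison, following the classical argument of Carrillo–Choi–Totzeck–Tse but adapted to handle the extra dependence on the opponents' expectations $\boldsymbol{\bar\mu}^{-m}$, $\boldsymbol{\bar\nu}^{-m}$. Write $\con_\alpha^m(\mu_m,\boldsymbol{\bar\mu}^{-m}) = \frac{\int x\,\omega_\alpha^{\rE_m}(x;\boldsymbol{\bar\mu}^{-m})\diff\mu_m(x)}{\int \omega_\alpha^{\rE_m}(x;\boldsymbol{\bar\mu}^{-m})\diff\mu_m(x)}$ and subtract the corresponding expression for $\nu$. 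Adding and subtracting the mixed term $\frac{\int x\,\omega_\alpha^{\rE_m}(x;\boldsymbol{\bar\nu}^{-m})\diff\nu_m(x)}{\int \omega_\alpha^{\rE_m}(x;\boldsymbol{\bar\mu}^{-m})\diff\mu_m(x)}$ splits the error into (a) a term where only the numerator changes and (b) a term where only the denominator changes, each divided by $\int\omega_\alpha^{\rE_m}(x;\boldsymbol{\bar\mu}^{-m})\diff\mu_m(x)$. So the first step is a lower bound on this denominator: using \ref{ass:bnd}, $\rE_m(x;\boldsymbol{\bar\mu}^{-m}) \leq c(|(x,\boldsymbol{\bar\mu}^{-m})|^\ell + G)$, and since $|\boldsymbol{\bar\mu}^{-m}| \leq \sum_{j}|\EE[\mu_j]| \leq \sum_j R^{1/p}$ is controlled by $R$, one integrates $e^{-\alpha\rE_m}$ against $\mu_m$ and uses the $p$-moment bound (together with $p \geq p_{\mc M}$, which is exactly what makes $\int e^{-\alpha c|x|^\ell}\diff\mu_m \gtrsim \int(\text{something integrable})$ work in both the $\ell=0$ and $\ell>0$ cases) to get $\int\omega_\alpha^{\rE_m}(x;\boldsymbol{\bar\mu}^{-m})\diff\mu_m(x) \geq c_0(R) > 0$.

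Next I would bound the two pieces. For each, the essential tool is that $|x|\,\omega_\alpha^{\rE_m}(x;\cdot)$ and $\omega_\alpha^{\rE_m}(x;\cdot)$ are, respectively, $\lesssim (1+|x|)$-type and bounded, with a modulus of continuity in $x$ and in the opponent variable governed by \ref{ass:lip}: since $|e^{-a} - e^{-b}| \leq |a-b|$ (as $e^{-a},e^{-b}\le 1$ here, after controlling $\rE_m$ from below only matters for the denominator), we get $|\omega_\alpha^{\rE_m}(x;\boldsymbol{\bar\mu}^{-m}) - \omega_\alpha^{\rE_m}(y;\boldsymbol{\bar\nu}^{-m})| \lesssim \alpha(1+|x|+|y|+R)^s(|x-y| + |\boldsymbol{\bar\mu}^{-m} - \boldsymbol{\bar\nu}^{-m}|)$. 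Then I introduce an optimal (or near-optimal) coupling $\pi_j$ between $\mu_j$ and $\nu_j$ realizing $W_p(\mu_j,\nu_j)$, write every integral appearing as an integral against the relevant $\pi_j$, and estimate the integrands using Hölder's inequality to convert the polynomial weights $(1+|x|+|y|)^s$ times $|x-y|$ into products of $p$-moments (finite and $\le R$-controlled) and $W_p(\mu_m,\nu_m)$. The opponent discrepancy is handled by $|\boldsymbol{\bar\mu}^{-m} - \boldsymbol{\bar\nu}^{-m}| \leq \sum_{j\neq m}|\EE[\mu_j] - \EE[\nu_j]| \leq \sum_{j\neq m} W_1(\mu_j,\nu_j) \leq \sum_{j\neq m} W_p(\mu_j,\nu_j)$, which directly produces the sum $\sum_{j=1}^M W_p(\mu_j,\nu_j)$ on the right-hand side. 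Summing the contributions of the numerator-difference term and the denominator-difference term yields the claim, with $C$ depending on $R$, $M$, $p$ (and on $\alpha$, $c$, $G$, $s$, $\ell$, which are fixed).

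The main obstacle is the bookkeeping in the Hölder step: the weight $(1+|x|+|y|)^s$ must be integrated against $\diff\mu_m$ or $\diff\pi_m$, which requires an $s$-th moment, but the weighted numerator $|x|\,\omega_\alpha$ effectively needs an $(s+1)$-th or higher moment, and we only assume a uniform $p$-moment bound — so the hypothesis $p \geq p_{\mc M} = 2+s$ when $\ell = 0$ is exactly the threshold that must be tracked carefully (when $\ell > 0$ the Gaussian-type decay of $e^{-\alpha c|x|^\ell/c}$ absorbs any polynomial and $p_{\mc M}=1$ suffices). A secondary subtlety is making sure the lower bound $c_0(R)$ on the denominator is genuinely uniform over all $\mu_m \in \rP_{p,R}$ — this needs a Markov/Chebyshev argument: on the ball $|x| \leq \Lambda$ (with $\Lambda = \Lambda(R)$ chosen so $\mu_m(\{|x|\le\Lambda\})\ge 1/2$ by the moment bound), $\omega_\alpha^{\rE_m}(x;\boldsymbol{\bar\mu}^{-m}) \geq e^{-\alpha c((\Lambda + CR^{1/p})^\ell + G)}$, giving the bound. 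Once these two moment-threshold points are handled cleanly, the rest is the standard CBO stability computation.
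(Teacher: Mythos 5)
Your proposal is correct in spirit, but it takes a genuinely different route from the paper. The paper sidesteps the entire numerator/denominator computation by a short reduction: it forms the product measures $\bar\mu_m := \mu_m \otimes \delta_{\boldsymbol{\bar\mu}^{-m}}$ and $\bar\nu_m := \nu_m \otimes \delta_{\boldsymbol{\bar\nu}^{-m}}$ on $\RR^{M\cdot d}$, checks that $\bar\mu_m$ inherits the $p$-moment bound (with constant $M^p R$), and then invokes the single-measure Wasserstein stability estimate for the consensus map, namely \cite[Corollary 3.3]{gerber_mean-field_2024}, applied to $\bar\mu_m$ versus $\bar\nu_m$. The transport cost $W_p(\bar\mu_m,\bar\nu_m)$ then splits cleanly into the player-$m$ transport $W_p(\mu_m,\nu_m)$ and the opponents' mean discrepancies $|\EE[\mu_j]-\EE[\nu_j]|$, the latter controlled by $W_p(\mu_j,\nu_j)$ via Jensen, which is the same final step you use. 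What you propose is, in effect, to re-derive the content of that Corollary 3.3 from scratch in the multi-species setting: the denominator lower bound via Markov, the Lipschitz modulus of $\omega_\alpha^{\rE_m}$ from \ref{ass:lip}--\ref{ass:bnd}, the coupling plus H\"older bookkeeping, and the observation that $p\geq p_{\mc M}$ is precisely the moment threshold that closes the H\"older step when $\ell=0$. All of that is sound, and it is instructive that you correctly pinpointed where $p_{\mc M}$ enters; but the paper's product-with-Dirac reduction is shorter and more modular, since it hides all of that bookkeeping inside the already-established single-species estimate. One small imprecision to watch in your sketch: $\omega_\alpha^{\rE_m}=e^{-\alpha\rE_m}$ is not bounded by $1$ in general, because \ref{ass:bnd} only gives $\rE_m\geq -G/c$; you should use the resulting bound $\omega_\alpha^{\rE_m}\leq e^{\alpha G/c}$ as the constant in the Lipschitz estimate $|e^{-a}-e^{-b}|\leq e^{\alpha G/c}\,\alpha|a-b|$, rather than $1$.
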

    \begin{proof}
    Let us define the probability measure $\bar{\mu}_m:=\mu_m\otimes\delta_{\boldsymbol{\bar \mu}^{-m}}$ and $\bar{\nu}_m:=\nu_m\otimes\delta_{\boldsymbol{\bar \nu}^{-m}}$. We compute the $p$-mean of $\bar{\mu}_m$ by using the Jensen inequality
    \begin{align*}
        \int_{\RR^{M\cdot d}} |y|^p \diff \bar{\mu}_m(y)
        \leq M^{p-1}\left(\int_{\RR^d} |x|^p \diff \mu_m(x)+\sum_{j\neq m}\EE[\mu_j]^p\right)
        \leq M^p R. 
    \end{align*}
    
    Then from \cite[Corollary 3.3]{gerber_mean-field_2024} we know there exists a constant $C>0$ depending only on $R$ and $p$ such that
    	\begin{align*}
    		|\con_\alpha^m(\mu_m,\boldsymbol{\bar \mu}^{-m})-\con_\alpha^m(\nu_m,\boldsymbol{\bar \nu}^{-m})|
    		&\leq C W_p(\bar \mu_m,\bar \nu_m)\\
    		&\leq C M^{\frac{p-1}{p}} W_p(\mu_m,\nu_m)+CM^{\frac{p-1}{p}}\sum_{j\neq m}\big(|\EE[\mu_j]-\EE[\nu_j]|^p\big)^{\frac{1}{p}}.
    	\end{align*}
    	Let $\pi_j\in \Gamma(\mu_j,\nu_j)$ be an arbitrary coupling of $\mu_j$ and $\nu_j$. Note by the Jensen inequality
    	\begin{align*}
    		|\EE[\mu_j]-\EE[\nu_j]|
    		\leq \left(\int_{\RR^d\times\RR^d}|x-y|^p\diff \pi_j(x,y)\right)^{\frac{1}{p}}\,.
    	\end{align*}
  Then we optimize over all couplings $\pi_j$ and have
  \begin{align*}
      |\EE[\mu_j]-\EE[\nu_j]|
    		\leq W_p(\mu_j,\nu_j).
  \end{align*}
    	Thus combining both inequalities delivers the result.
    \end{proof}

    The following corollary is analogue to \cite[Lemma 2.1]{carrillo_analytical_2018}, but we provide a different proof.
    
    \begin{corollary}[local Lipshitz property]
        \label{cor:loc_lip_prop}
        We define the function $F_N^{m}:\RR^{M\cdot N\cdot d}\ra \RR^d$ by 
        \begin{align*}
            F_N^m(X^1,\ldots,X^M)
            &:=\sum_{i=1}^N \frac{\omega_\alpha^{\rE_m}(X^{m,i},\scbo^{-m})}{\sum_{j=1}^N \omega_\alpha^{\rE_m}(X^{m,j},\scbo^{-m})}\cdot X^{m,i},
        \end{align*}
        where $\{X^{m,i}\}_{m\in [M],i\in[N]}\subseteq \RR^{ d}$, and we define $X^m:=(X^{m,1},\ldots,X^{m,N})$ and 
        \begin{align*}
            \scbo^{-m} :=\frac{1}{N}\sum_{i=1}^N \big(X^{1,i},\ldots,X^{m-1,i},X^{m+1,i},\ldots,X^{M,i}\big).
        \end{align*}
        Then for all $R>0$, there exists a constant $C>0$ depending on $R$ and $N$ such that, for all $X_0,X_1\in \RR^{M\cdot N\cdot d}$ with $|X_0|\vee |X_1|\leq R$,
        \begin{align}
            \label{eq:loc_lip_cons}
            |F_N^m(X_0)-F_N^m(X_1)|\leq C |X_0-X_1|.
        \end{align}
    \end{corollary}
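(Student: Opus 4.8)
The plan is to reduce Corollary \ref{cor:loc_lip_prop} to the Wasserstein stability estimate of Lemma \ref{lem:wasserstein_stability_estimate_for_weighted_mean} by passing from points in $\RR^{M\cdot N\cdot d}$ to empirical measures. First I would observe that, given $X=(X^1,\ldots,X^M)\in\RR^{M\cdot N\cdot d}$ with each $X^m=(X^{m,1},\ldots,X^{m,N})$, we may form the empirical measures $\mu_m^X:=\frac1N\sum_{i=1}^N\delta_{X^{m,i}}$, and then
\begin{align*}
    F_N^m(X)=\con_\alpha^m\big(\mu_m^X,\,\boldsymbol{\bar\mu}^{-m}_X\big),
\end{align*}
where $\boldsymbol{\bar\mu}^{-m}_X=(\EE[\mu_1^X],\ldots,\EE[\mu_{m-1}^X],\EE[\mu_{m+1}^X],\ldots,\EE[\mu_M^X])$, since $\EE[\mu_k^X]=\frac1N\sum_i X^{k,i}$ is exactly the $k$-th block of $\scbo$. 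This identity is just unwinding the definitions of $\con_\alpha^m$ and of the empirical measure.

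Next, given $X_0,X_1$ with $|X_0|\vee|X_1|\le R$, I would note that each empirical measure $\mu_m^{X_0},\mu_m^{X_1}$ has $p$-th moment bounded by $\frac1N\sum_i|X^{m,i}|^p\le |X|^p\le R^p$, so $\mu_m^{X_0},\mu_m^{X_1}\in\rP_{p,R^p}(\RR^d)$ for any fixed $p\ge p_{\mc M}$ (say $p=2\vee p_{\mc M}$). Hence Lemma \ref{lem:wasserstein_stability_estimate_for_weighted_mean} applies with radius $R^p$ and gives
\begin{align*}
    |F_N^m(X_0)-F_N^m(X_1)|\le C\sum_{j=1}^M W_p\big(\mu_j^{X_0},\mu_j^{X_1}\big),
\end{align*}
with $C$ depending on $R$, $M$, $p$. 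Finally, I would bound each Wasserstein distance between the two empirical measures by the ``synchronous'' coupling that pairs $X_0^{j,i}$ with $X_1^{j,i}$: this is a valid transport plan because both measures assign mass $1/N$ to each index, so
\begin{align*}
    W_p\big(\mu_j^{X_0},\mu_j^{X_1}\big)^p\le \frac1N\sum_{i=1}^N |X_0^{j,i}-X_1^{j,i}|^p\le \frac1N |X_0-X_1|^p,
\end{align*}
whence $W_p(\mu_j^{X_0},\mu_j^{X_1})\le N^{-1/p}|X_0-X_1|\le|X_0-X_1|$. Summing over $j\in[M]$ absorbs the factor $M$ into $C$, yielding \eqref{eq:loc_lip_cons}.

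There is essentially no serious obstacle here; the only points requiring care are bookkeeping ones. One must make sure the fixed exponent $p$ chosen for the application of Lemma \ref{lem:wasserstein_stability_estimate_for_weighted_mean} satisfies $p\ge p_{\mc M}$ (so that the moment bound on the joint measure $\mu_m\otimes\delta_{\boldsymbol{\bar\mu}^{-m}}$ used inside that lemma is legitimate), and one must correctly track that the $N$-dependence of $C$ comes only through the radius $R^p$ (which after all depends on $R$ and the — fixed — choice of $p$, not on $N$) together with possibly $N^{-1/p}\le 1$; the statement merely asks for a constant depending on $R$ and $N$, so even a crude bound suffices. The identification $F_N^m(X)=\con_\alpha^m(\mu_m^X,\boldsymbol{\bar\mu}^{-m}_X)$ should be stated explicitly, since it is the conceptual content that makes the measure-theoretic stability estimate directly transferable to the finite-particle function, and it is also what makes this proof ``different'' from the pointwise argument of \cite[Lemma 2.1]{carrillo_analytical_2018}.
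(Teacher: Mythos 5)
Your proposal is correct and follows essentially the same route as the paper: both identify $F_N^m(X)$ with $\con_\alpha^m$ evaluated at the empirical measure $\mu_m^X$ and the averaged strategy vector, then apply Lemma \ref{lem:wasserstein_stability_estimate_for_weighted_mean} and bound each $W_p(\mu_j^{X_0},\mu_j^{X_1})$ by the synchronous coupling that pairs particle $i$ with particle $i$. The only cosmetic difference is in how the resulting $\ell^p$-sum of particle differences is collapsed into the Euclidean norm $|X_0-X_1|$ (you use the $1/N$ normalization and $\ell^p\le\ell^2$ directly; the paper uses $\ell^p\le\ell^1$ and Cauchy--Schwarz), which changes nothing since $C$ is allowed to depend on $N$.
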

    \begin{proof}
        Let $R>0$ and $X_0,X_1\in \RR^{M\cdot N\cdot d}$ with $|X_0|, |X_1|\leq R$. Then, define the particle distributions and expectations by
        \begin{align*}
            \begin{split}
                \dcbo_0^m&:= \frac{1}{N}\sum_{i=1}^N \delta_{X_0^{m,i}},\\
                \scbo_0&:=\big(\EE[\rho_0^1],\ldots,\EE[\rho_0^M]\big),
            \end{split}
            \begin{split}
                \dcbo_1^m&:= \frac{1}{N}\sum_{i=1}^N \delta_{X_1^{m,i}},\\
                \scbo_1&:=\big(\EE[\rho_1^1],\ldots,\EE[\rho_1^M]\big).
            \end{split}
        \end{align*}
        Note that we have the identity, for $k\in\{0,1\}$,
        \begin{align*}
            \con_\alpha^m(\dcbo_k^m, \scbo_k^{-m})
            =F_N^m(X_k^1,\ldots,X_k^M).
        \end{align*}
        Then by Lemma \ref{lem:wasserstein_stability_estimate_for_weighted_mean} for any $p\geq \pM$ fixed, there exists a constant $C>0$ depending on $R$ and $p$ such that
        \begin{align*}
    		|F_N^m(X_0^1,\ldots,X_0^M)-&F_N^m(X_1^1,\ldots,X_1^M)|
            =|\con_\alpha^m(\dcbo_0, \scbo_0^{-m})-\con_\alpha^m(\dcbo_1,\scbo_1^{-m})|\\
    		&\leq C \sum_{m=1}^N W_p(\dcbo_0^m,\dcbo_1^m)
            \leq C \sum_{m=1}^M\left(\sum_{i=1}^N |X_0^{m,i}-X_1^{m,i}|^p\right)^{\frac{1}{p}}\\
            &\leq C \sum_{m=1}^M\sum_{i=1}^N |X_0^{m,i}-X_1^{m,i}|
            \leq C\sqrt{MN} |X_0-X_1|.
    	\end{align*}
        Hence, we have proven the local Lipshitz property as required.
    \end{proof}

    \subsection{Bound on weighted moments}
    \label{section:bound_on_weighted_moments}
    We adapt the following lemma from \cite[Proposition A.3]{gerber_mean-field_2024}.
\begin{lemma}\label{lemma:est_mean_cost}
	Let $p\geq 1$, $\dcbo_1,\ldots,\dcbo_M\in \rP_p(\RR^d)$ and let Assumption \ref{ass:bnd} hold. Then there exists constants $C> 0$ depending on $p,\ell,c, G$ and $M$ such that for all $m\in [M]$, it holds that
	\begin{equation*}
		|\con_\alpha^m(\dcbo_{m},\overline \scbo^{-m})|\leq \frac{\displaystyle\int_{\RR^{d}} |x|\, e^{-\alpha\rE_m(x;\overline\scbo^{-m})}\diff \dcbo_m(x)}{\displaystyle\int_{\RR^{d}} e^{-\alpha\rE_m(x;\overline\scbo^{-m})}\diff \dcbo_m(x)}\leq C\left(\sum_{j=1}^M\int_{\RR^d}|x|^p\diff \dcbo_j(x)\right)^\frac{1 }{p },
	\end{equation*}
    where $\overline\scbo=(\EE[\dcbo_1],\ldots,\EE[\dcbo_M])$.
\end{lemma}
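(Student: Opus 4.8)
My plan is to dispose of the leftmost inequality at once and then prove the rightmost one by separating the cases $\ell=0$ and $\ell>0$, the coercive \emph{lower} bound in Assumption~\ref{ass:bnd} being informative only when $\ell>0$. The leftmost inequality is immediate, since $\omega_\alpha^{\rE_m}(\cdot;\overline\scbo^{-m})=e^{-\alpha\rE_m(\cdot;\overline\scbo^{-m})}>0$: the triangle inequality for the $\RR^d$-valued integral in the numerator of $\con_\alpha^m(\dcbo_m,\overline\scbo^{-m})$ gives $\big|\int_{\RR^d}x\,\omega_\alpha^{\rE_m}(x;\overline\scbo^{-m})\diff\dcbo_m(x)\big|\le\int_{\RR^d}|x|\,\omega_\alpha^{\rE_m}(x;\overline\scbo^{-m})\diff\dcbo_m(x)$, and one divides by the positive normalisation. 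For the rightmost inequality I set $m_p:=\sum_{j=1}^M\int_{\RR^d}|x|^p\diff\dcbo_j(x)$ and note that, by Jensen's inequality (using $p\ge1$), $|\EE[\dcbo_j]|\le(\int_{\RR^d}|x|^p\diff\dcbo_j)^{1/p}\le m_p^{1/p}$ for all $j$, so $|\overline\scbo^{-m}|\le\sqrt{M}\,m_p^{1/p}$; this will be used in both cases.

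If $\ell=0$, then Assumption~\ref{ass:bnd} degenerates to $\tfrac1c(1-G)\le\rE_m(x;y)\le c(1+G)$ for every $(x,y)$, so $e^{-\alpha\rE_m(x;\overline\scbo^{-m})}$ is trapped between the positive constants $e^{-\alpha c(1+G)}$ and $e^{\alpha(G-1)/c}$. Bounding the numerator of the quotient with the larger constant and the denominator with the smaller one, the quotient is at most $e^{\alpha(G-1)/c+\alpha c(1+G)}\int_{\RR^d}|x|\diff\dcbo_m(x)$, and one final use of Jensen bounds $\int_{\RR^d}|x|\diff\dcbo_m$ by $m_p^{1/p}$.

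If $\ell>0$, I would first apply Jensen once more, to the convex map $t\mapsto t^p$ and the probability measure obtained by normalising $\omega_\alpha^{\rE_m}(\cdot;\overline\scbo^{-m})\diff\dcbo_m$, reducing the claim to bounding the ratio $\big(\int_{\RR^d}|x|^p\omega_\alpha^{\rE_m}\diff\dcbo_m\big)\big/\big(\int_{\RR^d}\omega_\alpha^{\rE_m}\diff\dcbo_m\big)$ by $C^pm_p$. I would fix the truncation radius $R:=\big(2\int_{\RR^d}|x|^p\diff\dcbo_m\big)^{1/p}\le(2m_p)^{1/p}$, so that Markov's inequality gives $\dcbo_m(\{|x|\le R\})\ge\tfrac12$, and split the numerator accordingly. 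On $\{|x|\le R\}$ the integrand is $\le R^p\,\omega_\alpha^{\rE_m}$, contributing at most $R^p\le2m_p$ to the ratio. On $\{|x|>R\}$ the lower bound $\rE_m(x;\overline\scbo^{-m})\ge\tfrac1c(|x|^\ell-G)$ of Assumption~\ref{ass:bnd} yields $|x|^p\,\omega_\alpha^{\rE_m}(x;\overline\scbo^{-m})\le e^{\alpha G/c}\sup_{r\ge0}\big(r^pe^{-\alpha r^\ell/c}\big)<\infty$, so this part of the numerator is a constant. To convert this into a bound after dividing, I would estimate the normalisation from below by restricting to $\{|x|\le R\}$ and using the upper bound $\rE_m(x;\overline\scbo^{-m})\le c\big((R+|\overline\scbo^{-m}|)^\ell+G\big)$ there, together with $\dcbo_m(\{|x|\le R\})\ge\tfrac12$; inserting $R\le(2m_p)^{1/p}$ and $|\overline\scbo^{-m}|\le\sqrt M\,m_p^{1/p}$ and collecting constants then closes the estimate. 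This reproduces, in the multi-species setting, the computation of \cite[Proposition~A.3]{gerber_mean-field_2024}.

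The leftmost inequality and the case $\ell=0$ are routine. The case $\ell>0$ carries all the work, and within it the one genuinely delicate step is the lower bound on the normalisation $\int_{\RR^d}\omega_\alpha^{\rE_m}\diff\dcbo_m$ and how it balances the constant-size tail contribution: one has to weigh the upper and lower bounds of Assumption~\ref{ass:bnd} against the exponentially decaying weight $e^{-\alpha r^\ell/c}$ and track exactly which of the data the final constant may depend on. I note that when the coercivity constant satisfies $c\le1$ the ratio above can be bounded directly by $\int_{\RR^d}|x|^p\diff\dcbo_m$ via a correlation (Chebyshev) inequality, since $e^{-\alpha\rE_m(\cdot;\overline\scbo^{-m})}$ is then, up to a bounded factor, a nonincreasing function of $|x|$; the truncation argument is what covers the general case.
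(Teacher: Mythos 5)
Your high-level plan is reasonable: the leftmost inequality by the triangle inequality, Jensen to reduce to the $p$-th-moment ratio, the $\ell=0$ case by two-sided bounds on the weight, and a Markov truncation for $\ell>0$. Note, though, that the paper does not re-derive the estimate at all: it forms $\dcbo:=\dcbo_m\otimes\delta_{\overline\scbo^{-m}}\in\rP_p(\RR^{Md})$, bounds the numerator $\int|x|\omega\,\diff\dcbo_m\leq\int|y|\omega\,\diff\dcbo$, invokes the \emph{single-species} result \cite[Proposition A.3]{gerber_mean-field_2024} verbatim for $\dcbo$, and then uses Jensen to compare $\int_{\RR^{Md}}|y|^p\,\diff\dcbo$ with $\sum_j\int|x|^p\,\diff\dcbo_j$. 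Your attempt to reconstruct that single-species proposition from scratch is a legitimate alternative route, but as written it has a quantitative gap.

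The gap is in the $\ell>0$ tail estimate. Set $m_p:=\sum_j\int|x|^p\,\diff\dcbo_j$ and $R:=(2\int|x|^p\,\diff\dcbo_m)^{1/p}$. You bound the tail $\int_{\{|x|>R\}}|x|^p\omega\,\diff\dcbo_m$ by the \emph{absolute} constant $A:=e^{\alpha G/c}\sup_{r\geq 0}(r^pe^{-\alpha r^\ell/c})$, and the normalisation from below by $\tfrac12 e^{-\alpha c((R+|\overline\scbo^{-m}|)^\ell+G)}$. Dividing gives a tail contribution of order $A\,e^{\alpha c(R+|\overline\scbo^{-m}|)^\ell}\lesssim\exp\bigl(c'\,m_p^{\ell/p}\bigr)$, which grows strictly faster than the target $C^p m_p$ as $m_p\to\infty$. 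So this step does not "close the estimate"; the bound you obtain is exponential, not linear, in $m_p$. The cure is precisely the delicacy you flag but do not resolve: you must \emph{retain} the decay $e^{-\alpha r^\ell/c}$ in the tail supremum rather than replace it with its global maximum, and you must truncate at a radius $\widetilde R:=Q\,(R+|\overline\scbo^{-m}|)$ with $Q\geq c^{2/\ell}$, not at $R$. Then the tail bound is $e^{\alpha G/c}\,\widetilde R^{\,p}e^{-\alpha\widetilde R^{\,\ell}/c}$ (once $\widetilde R$ is past the maximiser), and the exponent $\alpha c(R+|\overline\scbo^{-m}|)^\ell-\alpha\widetilde R^{\,\ell}/c$ is nonpositive, so the exponentials cancel and the tail contribution is $\lesssim\widetilde R^{\,p}\lesssim m_p$ as required; the residual small-$\widetilde R$ regime can be handled by bounding the tail integral by $e^{\alpha G/c}\int|x|^p\,\diff\dcbo_m$ directly. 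Without the enlarged truncation radius and the retained decay, the argument you outline fails for measures with a large $p$-th moment.
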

\begin{proof}
We define the probability measure $\dcbo:=\dcbo_m\otimes \delta_{\overline \scbo^{-m}}$. Then from \cite[Proposition A.3]{gerber_mean-field_2024}, there exists a constant $C> 0$ depending on $p$, $\ell$, $c$ and $G$ such that, for all $m\in [M]$,
    \begin{align*}
        |\con_\alpha^m(\dcbo_m,\smf^{-m})|&\leq \frac{\displaystyle\int_{\RR^{d}} |x|\, e^{-\alpha\rE_m(x;\overline \scbo^{-m})}\diff \dcbo_m(x)}{\displaystyle\int_{\RR^{d}} e^{-\alpha\rE_m(x;\overline \scbo^{-m})}\diff \dcbo_m(x)}\\
        &\leq \frac{\displaystyle\int_{\RR^{M\cdot d}} |y|\, e^{-\alpha\rE_m(y)}\diff \dcbo(y)}{\displaystyle\int_{\RR^{M\cdot d}} e^{-\alpha\rE_m(y)}\diff \dcbo(y)}
        \leq C M^{\frac{p-1}{p}}\left(\int_{\RR^d}|x|^p\diff \dcbo_m(x)+\sum_{j\neq m} |\EE[\dcbo_j]|^p\right)^\frac{1}{p}.
    \end{align*}
    If we use the Jensen inequality, we get the desired bound 
    \begin{align*}
        \left(\int_{\RR^d}|x|^p\diff \dcbo_m(x)+\sum_{j\neq m} |\EE[\dcbo_j]|^p\right)^\frac{1}{p}
        \leq \left(\sum_{j=1}^M\int_{\RR^d}|x|^p\diff \dcbo_j(x)\right)^\frac{1}{p}.
    \end{align*}
    We conclude the proof by recombining the above terms.
\end{proof}
\begin{remark}\label{rmk}
    The above estimate still holds  when $\rho_m=\rho^{m,N}=\frac{1}{N}\sum_{i=1}^N\delta_{X^{m,i}}$ is an empirical measure and $\EE[\rho^{m,N}]=\frac{1}{N}\sum_{i=1}^N X^{m,i}$ is the sample average.
\end{remark}

    \subsection{Moment estimate for the CBO particle system}
    \label{section:moment_estimate_for_cbo_dynamics}

    We control the moments of the particles in \eqref{eq:cbo} using the same strategy as in \cite[Lemma 3.5]{gerber_mean-field_2024}.
     
    \begin{lemma}[Moment estimates for the empirical measures]
    	\label{lemma:moment_estimates_for_the_empirical_measures}
    	Let $p\geq 2$, $\bar\rho_0^1,\ldots,\bar\rho_0^M\in \rP_p(\RR^d)$ and let Assumptions \ref{ass:lip} and \ref{ass:bnd} hold. Let $\{X^{i,m}\}_{i\in [N], m\in [M]}$ be solutions to the SDEs \eqref{eq:cbo}, where $\law(X_0^{m,1},\ldots,X_0^{m,N})=(\bar \rho_0^m)^{\otimes N}$. Then there exists a constant $\kappa>0$ that does not depend on $N$ such that
    	\begin{align*}
    		\sup_{\substack{m\in [M] \\ i \in [N] }}\EE \left[ \sup_{t \in [0, T]} 
    		\;|X_t^{m,i}|^p 
    		\right]\leq \kappa\cdot \sum_{m=1}^M\EE[|X_0^{m,1}|^p].
    	\end{align*}
    \end{lemma}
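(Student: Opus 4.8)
The plan is to apply It\^o's formula to $|X_t^{m,i}|^p$, take a supremum over time, then expectations, and close a Gr\"onwall-type inequality for the quantity $\Phi(T) := \sup_{m,i}\EE[\sup_{t\in[0,T]}|X_t^{m,i}|^p]$. First I would fix $m\in[M]$ and $i\in[N]$ and write, via It\^o,
\begin{align*}
    |X_t^{m,i}|^p = |X_0^{m,i}|^p &- \lambda p\int_0^t |X_s^{m,i}|^{p-2}\langle X_s^{m,i},\, X_s^{m,i}-\con_\alpha^m\rangle\diff s\\
    &+ \text{(It\^o correction terms in }\sigma^2\text{)} + \text{(martingale term)},
\end{align*}
where I abbreviate $\con_\alpha^m = \con_\alpha^m(\dcbo_s^{m,N},\scbo_s^{-m})$. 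The drift inner product I would handle by Young's inequality: $\langle X, X-\con\rangle \le \tfrac12|X|^2 + \tfrac12|\con|^2$ up to constants (or more carefully $-\langle X, X-\con\rangle \le |X||\con| \le \varepsilon|X|^2 + C_\varepsilon|\con|^2$), so the dangerous term is controlled by $|X_s^{m,i}|^p$ plus $|\con_\alpha^m|^p$. The It\^o correction term, using that $D$ has linear growth ($|D(z)|\le C(1+|z|)$ in Frobenius norm, since $D$ is Lipschitz), is likewise bounded by $C(|X_s^{m,i}|^p + |\con_\alpha^m|^p + 1)$.

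The key external input is Lemma \ref{lemma:est_mean_cost} together with Remark \ref{rmk}: since $\dcbo_s^{m,N}$ is the empirical measure of the particles, the remark gives
\begin{align*}
    |\con_\alpha^m(\dcbo_s^{m,N},\scbo_s^{-m})|^p \le C\sum_{j=1}^M \frac{1}{N}\sum_{k=1}^N |X_s^{j,k}|^p \le C\sum_{j=1}^M \sup_{k\in[N]}|X_s^{j,k}|^p.
\end{align*}
Next I would bound the martingale term. Taking $\sup_{t\le T}$ before expectation forces a Burkholder--Davis--Gundy estimate on the stochastic integral $\int_0^t p|X_s^{m,i}|^{p-2}X_s^{m,i}{}^\top D(X_s^{m,i}-\con_\alpha^m)\diff B_s^{m,i}$; its quadratic variation is bounded by $\int_0^T |X_s^{m,i}|^{2p-2}|D(\cdots)|^2\diff s \le C\int_0^T(|X_s^{m,i}|^{2p} + |\con_\alpha^m|^{2p} + 1)\diff s$, so BDG plus Young's inequality ($ab \le \tfrac12 a^2 + \tfrac12 b^2$ applied to $\sqrt{\EE[\sup|X|^p]}\cdot\sqrt{\EE[\int\cdots]}$) lets me absorb $\tfrac12\EE[\sup_{t\le T}|X_t^{m,i}|^p]$ into the left-hand side. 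After taking $\sup_{m,i}$ on both sides, collecting terms, and noting that $\EE[\sup_{k}|X_s^{j,k}|^p]$ over $s\le t$ is dominated by $\Phi(t)$, I arrive at
\begin{align*}
    \Phi(T) \le C\sum_{m=1}^M\EE[|X_0^{m,1}|^p] + C(1+T) + C\int_0^T \Phi(t)\diff t,
\end{align*}
and Gr\"onwall's inequality yields $\Phi(T) \le \kappa'\big(\sum_m\EE[|X_0^{m,1}|^p] + 1\big)$ with $\kappa'$ independent of $N$. The additive constant can be removed by a standard trick: one absorbs the $1$ using $\EE[|X_0^{m,1}|^p] \ge$ a fixed positive lower bound is false in general, so instead I would carry a term $\EE[\sup_t(1+|X_t^{m,i}|^p)]$ throughout, or simply note that since $\bar\rho_0^m\in\rP_p$, the bound $\kappa\sum_m\EE[|X_0^{m,1}|^p]$ in the statement should be read with an implicit additive constant absorbed via a trivial lower bound argument (e.g. replacing $\kappa$ by $\kappa\vee$ handling the case $\sum_m\EE[|X_0^{m,1}|^p]=0$, which forces all particles to start at $0$ and can be checked directly).

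The main obstacle is the $N$-independence of $\kappa$. This is exactly where Remark \ref{rmk} is essential: the naive bound $|\con_\alpha^m| \le$ (something growing with $N$) from the local Lipschitz Corollary \ref{cor:loc_lip_prop} would be useless, but the weighted-mean bound of Lemma \ref{lemma:est_mean_cost} is dimension-free in $N$ because it comes from Jensen's inequality on the Gibbs measure, not from brute-force Lipschitz estimates. A secondary technical point is justifying that the local martingale is a true martingale (so BDG and the vanishing of its expectation are legitimate); here I would first run the argument with a localizing stopping time $\tau_R = \inf\{t : |X_t^{m,i}|\ge R\}$, derive the Gr\"onwall bound uniformly in $R$ on $[0,T\wedge\tau_R]$, and then send $R\to\infty$ using Fatou's lemma, relying on Theorem \ref{theorem:existence_and_uniqueness_for_cbo} for almost-sure continuity and non-explosion of the solution.
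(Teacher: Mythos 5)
Your overall strategy (a Gr\"onwall estimate on $\Phi(T):=\sup_{m,i}\EE[\sup_{t\le T}|X_t^{m,i}|^p]$, using Lemma~\ref{lemma:est_mean_cost}/Remark~\ref{rmk} to get an $N$-independent bound on the consensus point, localizing the martingale before BDG) is sound and close to the paper's, modulo one genuine error that destroys exactly the $N$-independence you correctly identify as the crux.

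The gap is in the chain
\begin{equation*}
|\con_\alpha^m(\dcbo_s^{m,N},\scbo_s^{-m})|^p
\le C\sum_{j=1}^M \frac{1}{N}\sum_{k=1}^N|X_s^{j,k}|^p
\le C\sum_{j=1}^M \sup_{k\in[N]}|X_s^{j,k}|^p,
\end{equation*}
together with the claim that $\EE\bigl[\sup_{k}|X_s^{j,k}|^p\bigr]$ is dominated by $\Phi$. It is not: $\Phi$ carries the supremum over indices \emph{outside} the expectation, i.e.\ $\Phi(t)=\sup_{m,i}\EE[\sup_{s\le t}|X_s^{m,i}|^p]=\EE[\sup_{s\le t}|X_s^{j,1}|^p]$ by exchangeability, whereas $\EE[\sup_{k\le N}\cdots]$ has the max of $N$ random variables \emph{inside} the expectation, and that quantity typically diverges as $N\to\infty$ (already for i.i.d.\ random variables). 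So once you pass to $\sup_k$ your Gr\"onwall constant silently picks up $N$-dependence. The fix is simply to \emph{not} bound the average by a supremum: keep $\frac1N\sum_k|X_s^{j,k}|^p$, take expectation, and use that $(X_s^{j,1},\dots,X_s^{j,N})$ is exchangeable (the initial data is $(\bar\rho_0^j)^{\otimes N}$ and the dynamics is permutation-symmetric), so that
\begin{equation*}
\EE\Bigl[\tfrac1N\textstyle\sum_{k=1}^N|X_s^{j,k}|^p\Bigr]=\EE[|X_s^{j,1}|^p]\le \EE\Bigl[\sup_{r\le s}|X_r^{j,1}|^p\Bigr]\le\Phi(s).
\end{equation*}
This is exactly what the paper does; with that replacement the Gr\"onwall inequality closes with $\kappa$ independent of $N$. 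Two smaller remarks: (i) the paper works with the SDE in integral form, takes the $p$-th power of $X_t^{m,i}=X_0^{m,i}+\int_0^t(\cdot)\diff s+\int_0^t(\cdot)\diff B$, and applies BDG directly to the stochastic integral, rather than applying It\^o to $|X|^p$ --- both routes work, yours just generates a few more cross-terms to control; (ii) your use of $|D(z)|\le C(1+|z|)$ introduces an additive constant that the paper avoids, implicitly assuming $D(0)=0$ (true for both examples given, isotropic and anisotropic). If you keep the $+1$, the cleanest bookkeeping is to run Gr\"onwall for $1+\Phi(T)$, which yields the stated bound up to absorbing that constant into the initial-moment term.
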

    \begin{proof}
    	We use Lemma \ref{lemma:est_mean_cost} and Remark \ref{rmk}, and know that there exists a constant $C>0$ depending on $p$, $\ell$, $c$, $G$ and $M$ such that 
    	\begin{align*}
    		\con_\alpha^m(\dcbo_t^{m,N},\scbo_t^{-m})
      \leq C\left(\sum_{j=1}^M \int_{\RR^d}|x|^p\diff \dcbo_t^{j,N}(x)\right)^{\frac{1}{p}},
    	\end{align*}
    Thus, there exists a constant $C>0$ independent of $N$ such that we can estimate the drift and diffusion terms by
    	\begin{align*}
    		|X_t^{m,i}-\con_\alpha^m(\dcbo_t^{m,N}, \scbo_t^{-m})|
    		\vee |D(X_t^{m,i}-\con_\alpha^m(\dcbo_t^{m,N}, \scbo_t^{-m}))|
    		\leq C\left(|X_t^{m,i}|^p+\sum_{j=1}^M\int_{\RR^d}|x|^p\diff\dcbo_t^{j,N}(x)\right)^{\frac{1}{p}}.
    	\end{align*}
    	Next we apply the Burkholder-Davis-Grundy inequality \cite[Chapter 1, Theorem 7.3]{mao_stochastic_2011} to find for all $t\in [0,T]$ that
    	\begin{multline*}
    		\frac{1}{3^{p-1}}\EE\left[\sup_{s\in [0,t]}\abs{X_t^{m,i}}^p\right]
    		\leq \EE[|X_0^{m,i}|^p]+\lambda^p T^{p-1}\int_0^t \EE\left[\abs{X_s^{m,i}-\con_\alpha^m(\dcbo_s^{m,N},\scbo_s^{-m})}^p\right]\,\diff s\\
    		+T^{\frac{p}{2}-1}\sigma^p C_{\mathrm{DBG}} \int_0^t \EE\left[\abs{D(X_s^{m,i}-\con_\alpha^m(\dcbo_s^{m,N},\scbo_s^{-m}))}^p\right]\,\diff s\\
    		\leq \EE[|X_0^{m,i}|^p]+C\left(\int_0^t \EE\left[|X_s^{m,i}|^p+\sum_{j=1}^M\int_{\RR^d}|x|^p\diff\dcbo_s^{j,N}(x)\right]\diff s\right).
    	\end{multline*}
    	As the particles $X^{m,1},\ldots,X^{m,N}$ are exchangeable, we see that
    	\begin{align*}
    		\EE\left[\int_{\RR^d}|x|^p\diff\dcbo_t^{m,N}(x)\right]=\EE[|X_t^{m,i}|^p].
    	\end{align*}
    	Hence, for some constant $C>0$, we derive
    	\begin{align*}
    		\EE\left[\sup_{s\in [0,t]}\abs{X_s^{m,i}}^p\right]
    		\leq C\left(\EE[|X_0^{m,i}|^p]+\int_0^t \sum_{j=1}^M\EE\left[\sup_{r\in [0,s]}\abs{X_r^{j,i}}^p\right]\diff s\right).
    	\end{align*}
    	We sum over $m=1,\ldots,M$ and get the inequality
    	\begin{align*}
    		\sum_{m=1}^M\EE\left[\sup_{s\in [0,t]}\abs{X_s^{m,i}}^p\right]
    		\leq C\left( \sum_{m=1}^M\EE[|X_0^{m,i}|^p]+\int_0^t \sum_{m=1}^M\EE\left[\sup_{r\in [0,s]}\abs{X_r^{m,i}}^p\right]\diff s\right).
    	\end{align*}
    	The result now follows from the Grönwall inequality.
    \end{proof}

    \begin{remark}
        \label{remark:add_xi_to_cbo}
        If we exchange the SDE \eqref{eq:cbo} with the one below
        \begin{align*}
            \diff X_t^{m,i}
            =-\lambda( X_t^{m,i}-\xi \con_\alpha^m(\dcbo_t^{m,N},\scbo_t^{-m}))\diff t
            +\sigma D( X_t^{m,i}-\xi \con_\alpha^m(\dcbo_t^{m,N},\scbo_t^{-m}))\diff B_t^{m,i}
        \end{align*}
        for any $\xi\in [0,1]$, then the Lemma \ref{lemma:moment_estimates_for_the_empirical_measures} still holds. The proof of this is analogue.
    \end{remark}

    \subsection{Convergence of the weighted mean for i.i.d. samples}
    \label{section:meanfield_limit_of_consensus_for_mfcbo}
    In this section we adapt the proof of \cite[Lemma 3.7]{gerber_mean-field_2024}. The authors directly apply the result in \cite[Theorem 1]{doukhan_evaluation_2009} to derive a mean-field limit. The only thing we need to adapt in the proof of \cite[Lemma 3.7]{gerber_mean-field_2024} is to explicitly control a constant.

    \begin{theorem}[{\cite[Theorem 1]{doukhan_evaluation_2009}}]
        \label{thm:mf_stationary_sequence}
        Let $\{w_j, V_j\}_{j \in \mathbb{N}}$ be a stationary sequence with values in $\mathbb{R} \times \mathbb{R}^d$ with $w_j > 0$ almost surely, and set
        \begin{align*}
            \hat{N}_J = \frac{1}{J} \sum_{j=1}^{J} w_j V_j, \qquad
            \hat{D}_J = \frac{1}{J} \sum_{j=1}^{J} w_j, \qquad
            \hat{R}_J = \frac{\hat{N}_J}{\hat{D}_J}.
        \end{align*}
        Let also $N = \EE[\hat{N}_1]$, $D = \EE[\hat{D}_1]$, and $R = \frac{N}{D}$. Let $0 < p < q$ and assume that for some $\theta,C > 0$,
        \begin{align}
            \label{eq:uniform_bnds_for_stationary_sequence}
            \begin{split}
                &r:=\frac{p(q+2)}{q-p},\quad
                s:=\frac{pq}{q-p},\quad
                \EE[w_1^q]\leq \theta,\quad
                \EE[|V_1|^r]\leq \theta, \quad
                \EE[|w_1 V_1|^s]\leq \theta,\\
                &\qquad\qquad\quad\EE[|\hat{D}_J-D|^q]^{\frac{1}{q}}
                \leq C J^{-\frac{1}{2}},\qquad
                \EE[|\hat{N}_J-N|^p]^{\frac{1}{p}}
                \leq C J^{-\frac{1}{2}}.
            \end{split}
        \end{align}
        Then the following inequality is satisfied 
        \begin{align}
            \label{eq:mf_stationary_sequence}
            \EE[|\hat{R}_J - R|^p] \leq  \frac{C}{D}\left(1+2\frac{N}{D}+\frac{\theta}{D}+\theta\left(\frac{C}{D}\right)^{2/r}\right)\; J^{-\frac{1}{2}}.
        \end{align}
    
    \end{theorem}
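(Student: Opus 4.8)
The plan is to reprove \cite[Theorem~1]{doukhan_evaluation_2009} along the lines of its original argument, keeping track of every constant. Write the exact identity $\hat R_J - R = (\hat N_J - R\hat D_J)/\hat D_J$ and note that the numerator is centred, $\EE[\hat N_J - R\hat D_J] = N - RD = 0$. The expectation $\EE[|\hat R_J - R|^p]$ is then split over the ``stable'' event $\mc{A} := \{|\hat D_J - D| \leq D/2\}$ and its complement. On $\mc{A}$ one has $\hat D_J \geq D/2$, so
\begin{align*}
    |\hat R_J - R|\,\bbone_{\mc{A}}
    \leq \frac{2}{D}\,\big|(\hat N_J - N) - R(\hat D_J - D)\big|
    \leq \frac{2}{D}\,|\hat N_J - N| + \frac{2|R|}{D}\,|\hat D_J - D| ;
\end{align*}
raising to the power $p$ (via $(a+b)^p \leq 2^p(a^p+b^p)$) and using the hypotheses $\EE[|\hat N_J - N|^p]^{1/p} \leq CJ^{-1/2}$ and $\EE[|\hat D_J - D|^p]^{1/p} \leq \EE[|\hat D_J - D|^q]^{1/q} \leq CJ^{-1/2}$ (the latter by Jensen, since $p \leq q$) gives $\EE[|\hat R_J - R|^p\,\bbone_{\mc{A}}] \lesssim (C/D)^p(1+|R|^p)J^{-p/2}$, which, as $R = N/D$, accounts for the $\tfrac{C}{D}(1 + 2\tfrac{N}{D})$-type terms in the claimed constant.

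On $\mc{A}^c$ the quotient $\hat R_J$ need not be close to $R$, and is in fact unbounded since $\hat D_J$ may be arbitrarily small; here I would exploit that $w_j > 0$ almost surely, which makes $\hat R_J = \sum_{j=1}^J \lambda_j V_j$ a genuine convex combination of $V_1,\dots,V_J$ (weights $\lambda_j = w_j/\sum_{k=1}^J w_k > 0$ summing to one), so that $|\hat R_J| \leq \max_{1 \leq j \leq J}|V_j|$. Then Markov's inequality and the $q$-th moment hypothesis give $\PP(\mc{A}^c) \leq (2/D)^q \EE[|\hat D_J - D|^q] \leq (2C/D)^q J^{-q/2}$, while stationarity gives $\EE[\max_j|V_j|^r] \leq \sum_{j=1}^J \EE[|V_j|^r] \leq J\theta$. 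Hölder's inequality with exponent $r/p \geq 1$ then yields
\begin{align*}
    \EE\big[\max_j|V_j|^p\,\bbone_{\mc{A}^c}\big]
    \leq \EE\big[\max_j|V_j|^r\big]^{p/r}\,\PP(\mc{A}^c)^{1-p/r}
    \leq (J\theta)^{p/r}\Big(\tfrac{2C}{D}\Big)^{q(1-p/r)} J^{\,p/r - (q/2)(1-p/r)} ,
\end{align*}
and the definition $r = p(q+2)/(q-p)$ is precisely what collapses the $J$-exponent to $-p/2$; together with the term $|R|^p\PP(\mc{A}^c)$ (of strictly lower order) and a short simplification of the prefactor using the definitions of $r$ and $s$, this produces the remaining $\tfrac{\theta}{D} + \theta(\tfrac{C}{D})^{2/r}$-type terms.

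Summing the two contributions yields the estimate of the theorem, the argument naturally producing the rate $J^{-p/2}$. I expect the main obstacle to be the estimate on $\mc{A}^c$: since $\hat R_J$ admits no linearisation there, one is forced to use the crude a priori bound $|\hat R_J| \leq \max_j|V_j|$, whose $r$-th moment grows linearly in $J$, and the scheme closes only because $\PP(\mc{A}^c)$ decays like $J^{-q/2}$; striking this balance is exactly what forces the exponents $r = \frac{p(q+2)}{q-p}$ and $s = \frac{pq}{q-p}$ appearing in the hypotheses. Everything else is careful bookkeeping of the constant through the Jensen, Markov and Hölder steps.
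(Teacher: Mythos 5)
This theorem is cited verbatim from Doukhan and Lang (see the bracketed attribution in the theorem's title), and the paper offers no proof of its own; there is therefore no in-paper argument to compare against, and your proof should be judged on its own merits.

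Your strategy --- splitting on the stability event $\mathcal A = \{|\hat D_J - D| \le D/2\}$, linearising the centred ratio $(\hat N_J - R\hat D_J)/\hat D_J$ on $\mathcal A$ where $\hat D_J \ge D/2$, and on $\mathcal A^c$ combining the convexity bound $|\hat R_J| \le \max_j|V_j|$ with H\"older (exponent $r/p$) and Markov ($q$-th moment) --- is precisely the known route to this estimate. The bookkeeping is correct: writing $p/r = (q-p)/(q+2)$, the $J$-exponent on $\mathcal A^c$ is
\begin{align*}
\frac{p}{r} - \frac{q}{2}\Bigl(1-\frac{p}{r}\Bigr)
= \frac{q-p}{q+2} - \frac{q}{2}\cdot\frac{p+2}{q+2}
= -\frac{p(q+2)/2}{q+2}
= -\frac{p}{2},
\end{align*}
so the choice of $r$ does exactly what you say. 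You also correctly note that the argument produces $\EE[|\hat R_J - R|^p] \lesssim J^{-p/2}$; this exposes a transcription slip in the paper's restatement of the theorem, whose left-hand side should read $\EE[|\hat R_J - R|^p]^{1/p}$ (equivalently, the $L^p$ norm), which is also what the downstream Lemma \ref{lemma:convergence_of_the_weighted_mean_for_iid_samples} actually relies on when it concludes $\EE[\,\cdot\,] \le CN^{-p/2}$. Two mild remarks: (i) your argument never invokes the hypotheses $\EE[w_1^q]\le\theta$ and $\EE[|w_1V_1|^s]\le\theta$, which in Doukhan--Lang serve to \emph{establish} the decay rates for $\hat N_J$ and $\hat D_J$ that you instead take as given, so their omission is natural but worth flagging; and (ii) the prefactor your estimate produces on $\mathcal A^c$ involves $\theta^{p/r}(C/D)^{q(1-p/r)}$ rather than the displayed $\theta(C/D)^{2/r}$, so recovering the exact constant in \eqref{eq:mf_stationary_sequence} would require the ``short simplification'' you allude to but do not carry out --- harmless here, since only the order in $J$ matters for the application.
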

    
    \begin{lemma}[Convergence of the weighted mean for i.i.d. samples]
    	\label{lemma:convergence_of_the_weighted_mean_for_iid_samples}
         Let Assumptions \ref{ass:lip} and \ref{ass:bnd} hold and let $2\leq p<r$ and $R>0$. Then for all $\mu\in\rP_r(\RR^d)$ there exists constant $C>0$ depending on $\mu$, $R$, $p$ and $r$ such that for all $Y\in \RR^{(M-1)\cdot d}$ with $|Y|\leq R$ and $N\in \NN$ we have
    	 \begin{align*}
    	 	\EE[|\con_\alpha^m(\mu_N,Y)-\con_\alpha^m(\mu,Y)|^p]
    	 	\leq C N^{-\frac{p}{2}},\qquad 
    	 	\mu_N=\frac{1}{N}\sum_{j=1}^N \delta_{X^j},\qquad
    	 	\{X^j\}_{j\in\NN}\overset{\text{i.i.d.}}{\sim} \mu.
    	 \end{align*}
    \end{lemma}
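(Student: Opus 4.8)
The plan is to recognise that $\con_\alpha^m(\mu_N, Y)$ and $\con_\alpha^m(\mu, Y)$ are both ratios of (integrals against) the same weight function $\omega_\alpha^{\rE_m}(\,\cdot\,; Y)$, so that Theorem \ref{thm:mf_stationary_sequence} applies directly with the i.i.d.\ (hence stationary) sequence $\{w_j, V_j\}_{j\in\NN}$ given by $w_j := \omega_\alpha^{\rE_m}(X^j; Y) = e^{-\alpha\rE_m(X^j; Y)}$ and $V_j := X^j$. With this choice $\hat{D}_J = \int \omega_\alpha^{\rE_m}(x;Y)\diff\mu_N(x)$, $\hat{N}_J = \int x\,\omega_\alpha^{\rE_m}(x;Y)\diff\mu_N(x)$, so $\hat{R}_J = \con_\alpha^m(\mu_N, Y)$, and $R = N/D = \con_\alpha^m(\mu, Y)$ since $D = \EE[w_1] = \int\omega_\alpha^{\rE_m}\diff\mu$ and $N = \EE[w_1 V_1] = \int x\,\omega_\alpha^{\rE_m}\diff\mu$. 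Taking $J = N$ (the number of samples), the conclusion \eqref{eq:mf_stationary_sequence} gives exactly $\EE[|\con_\alpha^m(\mu_N,Y) - \con_\alpha^m(\mu,Y)|^p] \leq C N^{-1/2}$ — wait, that is $N^{-1/2}$, not $N^{-p/2}$; the resolution is that Theorem \ref{thm:mf_stationary_sequence} as stated already bakes in a CLT-type rate, and one must instead apply it after noting that $\hat D_J - D$ and $\hat N_J - N$ are themselves sums of i.i.d.\ centred terms, for which the Marcinkiewicz--Zygmund / Rosenthal inequality gives $\EE[|\hat D_J - D|^q]^{1/q} \lesssim J^{-1/2}$ and $\EE[|\hat N_J - N|^p]^{1/p}\lesssim J^{-1/2}$; plugging these into the theorem yields the $p$-th moment bound with the rate $J^{-p/2}$ once one tracks that the left side of \eqref{eq:mf_stationary_sequence} is the $p$-th moment (so rewriting $\EE[|\hat R_J - R|^p]\leq (\text{const})\,J^{-1/2}$ is the raw statement, but combined with the fact that the bound is in fact $J^{-p/2}$ in \cite{doukhan_evaluation_2009}'s sharper form). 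I would therefore follow \cite[Lemma 3.7]{gerber_mean-field_2024} verbatim in structure.

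The substantive steps are: (i) verify the moment hypotheses \eqref{eq:uniform_bnds_for_stationary_sequence}. Boundedness of $\EE[w_1^q] = \EE[e^{-q\alpha\rE_m(X^1;Y)}]$ follows from Assumption \ref{ass:bnd}: the lower bound $\rE_m(x;Y)\geq \tfrac1c(|(x,Y)|^\ell - G) \geq -\tfrac{G}{c}$ gives $w_1 \leq e^{\alpha G/c}$, so $\EE[w_1^q] \leq e^{q\alpha G/c} =: \theta_1$, uniformly in $Y$. For $\EE[|V_1|^r] = \EE[|X^1|^r] = \int|x|^r\diff\mu(x)$ we need $r \leq $ the integrability of $\mu$; since $\mu\in\rP_r(\RR^d)$ this is finite, and $r = p(q+2)/(q-p)$ with $q$ chosen large enough (this is where the hypothesis $p < r$ and the freedom to pick $q$ enters — we must be able to choose $q$ so that $p(q+2)/(q-p) \leq r$, which is possible precisely when $p < r$, taking $q$ close to $p$ from above, or rather solving the inequality for $q$). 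Then $\EE[|w_1 V_1|^s] \leq e^{s\alpha G/c}\EE[|X^1|^s]$ is finite for the same reason since $s = pq/(q-p) \leq r$. (ii) Establish the two CLT-rate bounds on $\hat D_J - D$ and $\hat N_J - N$ via Marcinkiewicz--Zygmund (or Rosenthal), using that $w_j - D$ and $w_j V_j - N$ are i.i.d., centred, with the requisite finite moments from step (i). (iii) Control the constant $C$ in \eqref{eq:mf_stationary_sequence} uniformly in $Y$ over $\{|Y|\leq R\}$: here $D = \int e^{-\alpha\rE_m(x;Y)}\diff\mu(x)$ must be bounded \emph{below} away from zero uniformly in $Y$, which follows from Assumption \ref{ass:bnd}'s upper bound $\rE_m(x;Y)\leq c(|(x,Y)|^\ell + G) \leq c(( |x| + R)^\ell + G)$, giving $w_1 \geq e^{-\alpha c((|x|+R)^\ell + G)}$ and hence $D \geq \int e^{-\alpha c((|x|+R)^\ell+G)}\diff\mu(x) > 0$, a quantity depending only on $\mu$, $R$, $\alpha$, $c$, $G$, $\ell$; similarly $N/D = |\con_\alpha^m(\mu,Y)| $ is bounded via Lemma \ref{lemma:est_mean_cost}. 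This is the ``explicitly control a constant'' remark preceding the lemma.

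The main obstacle is step (iii): ensuring every quantity entering the constant in \eqref{eq:mf_stationary_sequence} — in particular $1/D$, $N/D$, and $\theta$ — is bounded uniformly over the compact set $\{Y\in\RR^{(M-1)d} : |Y|\leq R\}$, since otherwise the final constant would depend on $Y$ and the conclusion (which must hold for \emph{all} such $Y$ with one constant) would fail. The lower bound on $D$ is the crux: it uses the polynomial \emph{upper} growth in Assumption \ref{ass:bnd} together with $|Y|\leq R$ to dominate $\rE_m(x;Y)$ by a function of $|x|$ alone, after which dominated convergence / direct integration against $\mu\in\rP_r$ gives a strictly positive lower bound independent of $Y$. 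A secondary bookkeeping point is the choice of $q$: one fixes any $q > p$ small enough that $r = p(q+2)/(q-p)$ satisfies $r \leq r_{\text{given}}$ (rewriting, $q \geq p\frac{r+2}{r-p}$ would force $r$ too large, so instead one wants $q$ \emph{large}; solving $p(q+2)/(q-p) \leq r$ gives $q(r - p) \geq p(r+2)$, i.e.\ $q \geq \frac{p(r+2)}{r-p}$, which is finite exactly because $r > p$) — so one takes $q := \frac{p(r+2)}{r-p}$ and checks $s = pq/(q-p) = \frac{p(r+2)}{r+2-(r-p)} \cdot \frac{1}{\,\cdot\,}$ — in any case $s < r$ holds, so all moments are controlled by the $r$-th moment of $\mu$. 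Once these uniformities are in hand, the conclusion $\EE[|\con_\alpha^m(\mu_N, Y) - \con_\alpha^m(\mu, Y)|^p] \leq C N^{-p/2}$ is immediate from Theorem \ref{thm:mf_stationary_sequence}.
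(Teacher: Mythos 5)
Your proposal follows the paper's own proof essentially step for step: you take $(w_j,V_j) = (e^{-\alpha\rE_m(X^j;Y)},\, X^j)$, set $q = \frac{p(r+2)}{r-p}$ so that the auxiliary exponents become $r$ itself and $s = \frac{p(r+2)}{p+2} < r$ (thus all moment hypotheses in \eqref{eq:uniform_bnds_for_stationary_sequence} are controlled by the $r$-th moment of $\mu$), verify the $J^{-1/2}$-rate hypotheses on $\hat D_J$ and $\hat N_J$ via Marcinkiewicz--Zygmund, and establish uniformity over $\{|Y|\leq R\}$ by bounding $D$ below through the upper bound in Assumption \ref{ass:bnd} together with a fixed $L$ satisfying $\mu(\{|x|\leq L\})\geq \tfrac12$, and $N$, $\theta$ above through the lower bound in Assumption \ref{ass:bnd} and $\int|x|^r\diff\mu$. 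The $J^{-1/2}$ versus $J^{-p/2}$ tension you flag is merely a typographical convention in the paper's transcription of \cite[Theorem 1]{doukhan_evaluation_2009} — the left side of \eqref{eq:mf_stationary_sequence} should be read as $\EE[|\hat R_J - R|^p]^{1/p}$ — and your resolution (the $p$-th moment decays like $J^{-p/2}$) is exactly what the paper uses.
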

    \begin{proof}
        We adapt the proof of \cite[Lemma 3.7]{gerber_mean-field_2024}. In particular, we apply Theorem \ref{thm:mf_stationary_sequence}. We show that the coefficient in \eqref{eq:mf_stationary_sequence} is uniformly bounded from above for all $|Y|\leq R$. To that end, we define the i.i.d. sequence
        \begin{align*}
            \omega_j:= e^{-\alpha \rE_m(X^j;\, Y)},
            \qquad V_j:= X^j.
        \end{align*}
        We set $q:=\frac{p(r+2)}{r-p}>p$ so that $r=\frac{p(q+2)}{q-p}$. Then, we define $s:=\frac{pq}{q-p}<r$ and establish the variable $\theta$ in the uniform bounds \eqref{eq:uniform_bnds_for_stationary_sequence} by
        \begin{gather*}
            \int_{\RR^d} e^{-\alpha q \rE_m(x;Y)}\diff \mu(x)
            \leq e^{\alpha q \frac{G}{c}},\qquad
            \int_{\RR^d} |x|^s\,e^{- \alpha s \rE_m(x;Y)}\diff \mu(x)
            \leq e^{\alpha s \frac{G}{c}}\left(\int_{\RR^d} |x|^r\diff \mu(x)\right)^{\frac{s}{r}}.
        \end{gather*}
        By the Marcinkiewicz–Zygmund inequality \cite[Chapter 10.3, Theorem 2]{chow_probability_1997} there exists a constant $B_p>0$ depending on $p$ such that
        \begin{align*}
            \EE[|\hat{N}_J-N|^p]
            &=\frac{1}{J^p}\EE\left[\left\vert \sum_{j=1}^J \omega_j V_j-\EE[\omega_1 V_1]\right\vert^p\right]
            \leq \frac{B_p}{J^p}\EE\left[\left( \sum_{j=1}^J |\omega_j V_j-\EE[\omega_1 V_1]|^2\right)^{\frac{p}{2}}\right].
        \end{align*}
        Next, we use the Jensen inequality to derive
        \begin{align*}
            &\frac{B_p}{J^p}\EE\left[\left( \sum_{j=1}^J |\omega_j V_j-\EE[\omega_1 V_1]|^2\right)^{\frac{p}{2}}\right]
            \leq \frac{B_p}{J^{\frac{p}{2}+1}}\EE\left[ \sum_{j=1}^J |\omega_j V_j-\EE[\omega_1 V_1]|^p\right]\\
            &\qquad=\frac{B_p}{J^{\frac{p}{2}}}\EE\left[ |\omega_1 V_1-\EE[\omega_1 V_1]|^p\right]
            \leq 2^p B_p\EE[ |\omega_1 V_1|^p] J^{-\frac{p}{2}}
            \leq 2^p B_p e^{\alpha p \frac{G}{c}}\left(\int_{\RR^d} |x|^r\diff \mu(x)\right)^{\frac{p}{r}} J^{-\frac{p}{2}}.
        \end{align*}
        We establish a similar bound for $\EE[|\hat{D}_J-D|^q]$. We need to bound the numerator $N$ from above
        \begin{align*}
            N\leq \int_{\RR^d} |x|\,e^{-\alpha \rE_m(x;Y)}\diff \mu(x)
            \leq e^{\alpha \frac{G}{c}}\left(\int_{\RR^d} |x|^r\diff \mu(x)\right)^{\frac{1}{r}},
        \end{align*}
        and the denominator $D$ from below
        \begin{align*}
            D=\int_{\RR^d} e^{-\alpha \rE_m(x;Y)}\diff \mu(x)
            \geq \int_{\RR^d} e^{-\alpha c (|(x,Y)|^\ell+G)}\diff \mu(x)
            \geq \frac{1}{2}e^{-\alpha c ((L+R)^\ell+G)}.
        \end{align*}
        for some $L>0$ such that $\mu(\{x\in\RR^d\;\vert\; |x|\leq L\})\geq \frac{1}{2}$. Thus, we see that the estimates in \eqref{eq:uniform_bnds_for_stationary_sequence} hold uniformly for all $|Y|\leq R$ and that the coefficient in \eqref{eq:mf_stationary_sequence} is uniformly bounded for all $|Y|\leq R$. With this we conclude the proof.
    \end{proof}

    \section{Proof of Theorem \ref{theorem:existence_and_uniqueness_for_cbo}}
    \label{section:wellposedness_for_cbo}

    We use the non-explosion criterion from stochastic Lyapunov theory as stated in \cite[Theorem 3.5]{khasminskii_stochastic_2012}. To that end, we rewrite the stochastic differential equation \eqref{eq:cbo} as
    \begin{align*}
        \diff X_t =F(X_t)\diff t+G(X_t)\diff B_t,
    \end{align*}
    where $X_t:=(X_t^{1,1},\ldots, X_t^{1,N},X_t^{2,1},\ldots,X_t^{M,N})$, $B_t=(B_t^{1,1},\ldots, B_t^{1,N},B_t^{2,1},\ldots,B_t^{M,N})$ and we define the functions
    \begin{align*}
        F(X_t)&:=-\lambda \begin{pmatrix}
            X_t^{1,1}-\con_\alpha^1(\dcbo_t^{1,N},\scbo_t^{-1}) \\ \vdots \\ X_t^{1,N}-\con_\alpha^1(\dcbo_t^{1,N},\scbo_t^{-1}) \\
            X_t^{2,1}-\con_\alpha^2(\dcbo_t^{2,N},\scbo_t^{-2}) \\ \vdots \\
            X_t^{M,N}-\con_\alpha^M(\dcbo_t^{M,N},\scbo_t^{-M})
        \end{pmatrix}\in\RR^{M\cdot N\cdot d},\\ 
        G(X_t)&:=\sigma\begin{pmatrix}
            D(X_t^{1,1}-\con_\alpha^1(\dcbo_t^{1,N},\scbo_t^{-1}))\\
            & \ddots  \\
            && D(X_t^{M,N}-\con_\alpha^M(\dcbo_t^{M,N},\scbo_t^{-M}))
        \end{pmatrix}
        \in\RR^{(M\cdot N\cdot d)\times (M\cdot N\cdot d)}.
    \end{align*}
    We have sublinear growth in the consensus
    \begin{align}
        \label{eq:sub_lin_cons}
        \sum_{m=1}^M |\con_\alpha^m(\dcbo_t^{m,N},\scbo_t^{-m})|
        \leq \sum_{m=1}^M\sum_{i=1}^N \frac{e^{-\alpha \rE_m(X_t^{m,i}; \scbo_t^{-m})}}{\sum_{j=1}^N e^{-\alpha \rE_m(X_t^{m,j}; \scbo_t^{-m})}}\cdot |X_t^{m,i}| 
        =\sum_{m=1}^M\sum_{i=1}^N |X_t^{m,i}|
        \leq \sqrt{MN}|X_t|.
    \end{align}
    Moreover, the functions are locally Lipshitz by Corollary \ref{cor:loc_lip_prop} and the fact that $D$ is Lipshitz continuous. Thus the conditions in \cite[Equation 3.32]{khasminskii_stochastic_2012} are satisfied. It remains to show that the coercive function $\vp(x):=|x|^2$ satisfies the condition \cite[Equation 3.43]{khasminskii_stochastic_2012}, this means showing there exists a constant $C>0$ with
    \begin{align*}
        F(X_t)\cdot \nabla \vp(X_t)+\frac{1}{2} \langle G(X_t),\; \nabla^2\vp(X_t)G(X_t)\rangle
        \leq C \vp(X_t),
    \end{align*}
    where the above inner product denotes the dot product between matrices. The condition immediately follows form the fact that
    \begin{align*}
        F(X_t)\cdot \nabla \vp(X_t)+\frac{1}{2} \inner{G(X_t)}{ \nabla^2\vp(X_t)G(X_t)}
        \leq |F(X_t)|^2+|X_t|^2+ \Vert G(X_t)\Vert^2
        \leq C\vp(X_t),
    \end{align*}
    where the above norm denotes the Frobenius norm, and we used the sublinear growth condition \eqref{eq:sub_lin_cons}. Thus the proof follows from applying \cite[Theorem 3.5]{khasminskii_stochastic_2012}.

    \section{Proof of Theorem \ref{theorem:existence_and_uniqueness_for_mfcbo}}
    \label{section:wellposedness_for_mfcbo}
    We follow the proof of \cite[Theorem 2.4]{gerber_mean-field_2024}, who in turn follow the proof of \cite[Theorem 3.1, Theorem 3.2]{carrillo_analytical_2018}. In this proof we use standard Leray-Schauder fixed point argument. We split the proof up into five steps.

    \smallskip
    
    \underline{\it Solution operator.} For some given $(u^1,\ldots,u^M)\in C^0([0,T],\RR^d)^{\otimes M}$, by using the classical theory of SDEs \cite[Theorem 5.2.1]{bernt_oksendal_stochastic_2003} we can uniquely solve the following SDEs, for $m\in [M]$,
    \begin{align}\label{LCBO}
        \diff Y_t^{m}
        =-\lambda( Y_t^{m}-u_t^m)\diff t
        +\sigma D( Y_t^{m}-u_t^m)\diff B_t^{m},
    \end{align}
    with the initial condition $\law(Y_0^m)=\dmf_0^m$ and Brownian motions $\{B_t^m\}_{m\in [M]}$. We define the laws $\nu_t^m=\mbox{law} (Y_t^m)$. Additionally, the processes $\{Y_t^m\}_{m\in [M]}$ are almost surely continuous.  For any $0\leq r\leq t\leq T$, it holds by Burkholder-Davis-Gundy inequality \cite[Chapter 1, Theorem 7.3]{mao_stochastic_2011} that
    \begin{align}\label{conti}
        \begin{split}
            \EE\left[\sup_{s\in[r,t]}|Y_s^m-Y_r^m|^p\right]  
        \leq &2^{p-1}(t-r)^{p-1}\lambda^p \int_r^t\EE[|Y_s^m-u_s^m|^p]\diff s\\
        &\qquad+C_{BDG}\,2^{p-1}\sigma^p(t-r)^{p/2-1}\int_r^t\EE[|D(Y_s^m-u_s^m)|^p]\diff s.
        \end{split}
    \end{align}
    Letting $r=0$, one obtains
    \begin{align*}
        \EE\left[\sup_{s\in[0,t]}|Y_s^m|^p\right]	\leq C\left(\EE[|Y_0^m|^p]+\|u^m\|_\infty^p+\int_0^t\EE[|Y_s^m|^p]\diff s\right),
    \end{align*}
    where $C>0$ depends only on $T$, $\lambda$, $\sigma$ and $p$. Then it follows from Gronwall's inequality that
    \begin{equation}\label{momentbound}
        \EE\left[\sup_{t\in[0,T]}|Y_t^m|^p\right]\leq C\;\big(\EE[|Y_0^m|^p]+\|u^m\|_\infty^p\big)<\infty.
    \end{equation}
    Therefore, by dominated convergence theorem, we have for any $t\in[0,T]$ that 
    \begin{align*}
        \begin{split}
            \con_\alpha^m(\nu_t^m; \overline{\mathbf{Y}}_t^{-m})
        &=\frac{\int_{\RR^d} y_m\, \omega_\alpha^{\rE_m}(y_m;\overline{\mathbf{Y}}_t^{-m})\diff\nu_t^m(y_m)}{\int_{\RR^d} \omega_\alpha^{\rE_m}(y_m;\overline{\mathbf{Y}}_t^{-m})\diff \nu_t^m(y_m)}\\
     &\qquad\qquad\to \frac{\int_{\RR^d} y_m\, \omega_\alpha^{\rE_m}(y_m;\overline{\mathbf{Y}}_r^{-m})\diff \nu_r^m(y_m)}{\int_{\RR^d} \omega_\alpha^{\rE_m}(y_m;\overline{\mathbf{Y}}_r^{-m})\diff \nu_r^m(y_m)}=	\con_\alpha^m(\nu_r^m; \overline{\mathbf{Y}}_r^{-m})
        \end{split}
        \qquad\text{as } t\to r,
    \end{align*}
    where $\overline{\mathbf{Y}}=\EE\left[\left(Y^1,\ldots,Y^M\right)\right]$. This implies that the function $t\mapsto \con_\alpha^m(\nu_t^m; \overline{\mathbf{Y}}_t^{-m})$ is in $C^0([0,T],\RR^d)$. Thus the following solution operator is well-defined
    \begin{align*}
        \mc{T}:
     \left\{
     \begin{array}{rcl}
          C^0([0,T],\RR^d)^{\otimes M} &\to& C^0([0,T],\RR^d)^{\otimes M}  \\
          (u^1,\ldots,u^M)&\mapsto &(\con_\alpha^1(\nu^1; \overline{\mathbf{Y}}^{-1}),\ldots,\con_\alpha^M(\nu^M; \overline{\mathbf{Y}}^{-M}) )
     \end{array}\right.
    \end{align*}

    \underline{\it $\mc{T}$ is continuous.} Take any two functions $(u^1,\ldots,u^M),(v^1,\ldots,v^M)\in C^0([0,T];\RR^d)^{\otimes M}$ and let the processes $\{Y_t^m\}_{m\in[M]}$, $\{Z_t^m\}_{m\in[M]}$ be the corresponding solutions with respective laws $\{\nu_t^m\}_{m\in[M]}$, $\{\mu_t^m\}_{m\in[M]}$. Define the strategies $\overline{\textbf{Y}}:=\EE[(Y^1,\ldots,Y^M)]$ and $\overline{\textbf{Z}}:=\EE[(Z^1,\ldots,Z^M)]$. We use lemma \ref{lem:wasserstein_stability_estimate_for_weighted_mean} to estimate the difference between the two consensuses
    \begin{align*}
        |\con_\alpha^m(\nu_t^m, \overline{\textbf{Y}}_t^{-m})-\con_\alpha^m(\mu_t^m,\overline{\textbf{Z}}_t^{-m})|^p
        \leq C M^{p-1} \sum_{j=1}^M W_p^p(\nu_t^j,\mu_t^j)
        \leq C M^{p-1} \sum_{j=1}^M \EE\left[\sup_{s\in[0,t]}|Y_s^m-Z_s^m|^p\right].
    \end{align*}
    Then, it holds by Burkholder-Davis-Gundy inequality \cite[Chapter 1, Theorem 7.3]{mao_stochastic_2011} that
    \begin{align*}
            \EE\left[\sup_{s\in[0,t]}|Y_s^m-Z_s^m|^p\right]  
        \leq &2^{p-1}t^{p-1}\lambda^p \int_0^t\EE[|Y_s^m-Z_s^m-u_s^m+v_s^m|^p]\diff s\\
        &\qquad+C_{BDG}\,2^{p-1}\sigma^p(t-r)^{p/2-1}\int_r^t\EE[|(D(Y_s^m-u_s^m)-D(Z_s^m-v_s^m)|^p]\diff s\\
        & \leq C\left(\int_0^t \EE[\;|Y_s^m-Z_s^m|^p\;]\diff s+\int_0^t \EE[\;|u_s^m-v_s^m|^p\;]\diff r\right).
    \end{align*}
    Now by the Gronwall inequality we derive the inequality
    \begin{align*}
        \EE\left[\sup_{s\in[0,T]}|Y_s^m-Z_s^m|^p\right]
        \leq C\, \norm{u^m-v^m}_{L_t^\infty}^p.
    \end{align*}
    Combining all the above inequalities proves that $\mathcal{T}$ is a continuous operator.

    \smallskip
    
    \underline{\it $\mc{T}$ is compact.} To show that $\mc{T}$ is a compact operator, we fix $R>0$ and consider the ball
    \begin{equation*}
        \Lambda_R:=\left\{ (u^1,\ldots,u^M) \in C^0([0,T],\RR^d)^{\otimes M}: \sup_{m\in[M]}\|u^m\|_{\infty}\leq R \right\}\,.
    \end{equation*}
    By the Arzel\`{a}-Ascoli theorem, we have a compact embedding $C^{0,1/2}([0,T],\RR^d)\hookrightarrow C^0([0,T],\RR^d)$. Thus, it suffices to show that $\mc T(\Lambda_R)$ is bounded in $C^{0,1/2}([0,T],\RR^d)^{\otimes M}$. Now we take any function $(u^1,\ldots,u^M) \in \Lambda_R$ and consider the corresponding solution $(Y_t^1,\ldots,Y_t^M)$ of \eqref{LCBO} with pointwise law $(\nu_t^1,\ldots,\nu_t^M)$. We first observe that for any $m\in[M]$ and $p>0$
    \begin{align*}
        \|\con_\alpha^m(\nu^m; \overline{\mathbf{Y}}^{-m})\|_{L_t^\infty}
        &=\sup_{t\in[0,T]}\left|\frac{\int_{\RR^d} y_m\, \omega_\alpha^{\rE_m}(y_m;\overline{\mathbf{Y}}_t^{-m})\diff\nu_t^m(y_m)}{\int_{\RR^d} \omega_\alpha^{\rE_m}(y_m;\overline{\mathbf{Y}}_t^{-m})\diff \nu_t^m(y_m)}\right|\\
        &\leq \sup_{t\in[0,T]}C\;\left(\sum_{j=1}^M\int_{\RR^d}|y_j|^p\diff\nu_t^j(y_j)\right)^{\frac{1}{p}}\,,
    \end{align*}
   where we have used Lemma \ref{lemma:est_mean_cost} in the inequality.  It follows from \eqref{momentbound} that
    \begin{equation*}
            \|\con_\alpha^m(\nu^m; \overline{\mathbf{Y}}^{-m})\|_\infty<\infty\,.
    \end{equation*}
    Furthermore, in view of \eqref{conti} and \eqref{momentbound}, there is a constant $L>0$ depending on $R$, $T$, $\lambda$, $\sigma$ and $p$ such that we have the Hölder continuity
    \begin{equation}
        \EE[|Y_t^m-Y_r^m|^p]\leq L|t-r|^{p/2}\,,
    \end{equation}
    which implies that $W_p(\nu_t^m,\nu_r^m)\leq L^{1/p}|t-r|^{1/2}$. Then by Lemma \ref{lem:wasserstein_stability_estimate_for_weighted_mean}, it follows that the function $t\mapsto \con_\alpha^m(\nu_t^m; \overline{\mathbf{Y}}_t^{-m})$ is H\"{o}lder continuous with exponent $1/2$ for any $m\in [M]$. This completes the proof that $\mc T$ is compact.

    \smallskip
    
    \underline{\it Leray-Schauder fixed point Theorem.} The goal of this step is to apply the Leray-Schauder fixed point Theorem \cite[Theorem 11.3]{gilbarg_elliptic_2001}. To that end, we need to prove that the following set is bounded:
    \begin{equation}\label{set}
        \left\{ (u^1,\ldots,u^M) \in C^0([0,T],\RR^d)^{\otimes M}:~\exists \xi\in [0,1] \mbox{ such that } (u^1,\ldots,u^M) =\xi \mc T(u^1,\ldots,u^M)\right\}\,.
    \end{equation}
    To this end, let $(u^1,\ldots,u^M) \in C^0([0,T],\RR^d)^{\otimes M}$ be such that
    \begin{equation}\label{4.7}
        (u^1,\ldots,u^M) =\xi \mc T(u^1,\ldots,u^M)
    \end{equation}
    for some $\xi\in [0,1]$, and let $(Y^1,\ldots,Y^M)$ denote corresponding solution to \eqref{LCBO}. By \eqref{4.7}, the processes $(Y^1,\ldots,Y^M)$ are also solutions to
          \begin{align}
        \diff Y_t^{m}
        =-\lambda( Y_t^{m}-\xi \con_\alpha^m(\nu_t^m; \overline{\mathbf{Y}}_t^{-m}))\diff t
        +\sigma D( Y_t^{m}-\xi \con_\alpha^m(\nu_t^m; \overline{\mathbf{Y}}_t^{-m}))\diff B_t^{m},\quad \nu_t^m=\mbox{law} (Y_t^m).
    \end{align}
    Then by Lemma \ref{lemma:moment_estimates_for_the_empirical_measures} and Remark \ref{remark:add_xi_to_cbo} we find that
    \begin{equation*}
        \EE\left[\sup_{t\in[0,T]}|Y_t^m|^p\right]\leq \kappa	\cdot\sum_{m=1}^M \EE\left[|Y_0^m|^p\right],
    \end{equation*}
    where $\kappa>0$ is a constant. Applying Lemma \ref{lemma:est_mean_cost}, it follows that $\con_\alpha^m(\nu_t^m; \overline{\mathbf{Y}}_t^{-m})$ can be uniformly bounded in $[0,T]$ in terms of $\sum_{m=1}^M\EE\left[|Y_0^m|^p\right]$. This implies that the set \eqref{set} is indeed bounded. Therefore, we obtain the existence of a fixed point of $\mc T$, which is a solution to \eqref{eq:mf_cbo}, and we establish the moment bounds in \eqref{eq:mbounds} as a byproduct.

    \smallskip
    
    \underline{\it Uniqueness of solution.} Suppose that $(Y^1,\ldots,Y^M)$, $(Z^1,\ldots,Z^M):\Omega\ra  C^0([0,T],\RR^d)^{\otimes M}$ are two fixed points of the map $\mc{T}$. Define the difference $W_t^m:=Y_t^m-Z_t^m$ and let $\nu_t^m:=\law(Y_t^m)$ and $\mu_t^m:=\law(Z_t^m)$. By the Burkholder-Davis-Gundy inequality \cite[Chapter 1, Theorem 7.3]{mao_stochastic_2011} for all $t\in [0,T]$ we have 
    \begin{align*}
        \frac{1}{2^{p-1}}&\EE\left[\sup_{r\in [0,t]}\; |W_r^m|^p\right]
        \leq T^{p-1}\lambda^p\int_0^t \EE[\;|W_r^m-\con_\alpha^m(\nu_r^m; \overline{\mathbf{Y}}_r^{-m})+\con_\alpha^m(\mu_r^m; \overline{\mathbf{Z}}_r^{-m})|^p\;]\diff r\\
        &\quad + C_{\mathrm{BDG}}\, T^{p/2-1}\sigma^p\int_0^t \EE[\;|D(Y_r^m-\con_\alpha^m(\nu_r^m; \overline{\mathbf{Y}}_r^{-m}))-D(Z_r^m-\con_\alpha^m(\mu_r^m; \overline{\mathbf{Z}}_r^{-m})|^p\;]\diff r.\\
        &\leq C\left(\int_0^t \EE[\;|W_r^m|^p\;]\diff r+\int_0^t \EE[\;|\con_\alpha^m(\nu_r^m; \overline{\mathbf{Y}}_r^{-m})-\con_\alpha^m(\mu_r^m; \overline{\mathbf{Z}}_r^{-m})|^p\;]\diff r\right),
    \end{align*}
    for some constant $C>0$ depending on $T$, $\lambda$, $\sigma$ and $p$. By Lemma \ref{lem:wasserstein_stability_estimate_for_weighted_mean} we have that 
    \begin{align*}
        |\con_\alpha^m(\nu_t^m; \overline{\mathbf{Y}}_t^{-m})-\con_\alpha^m(\mu_t^m; \overline{\mathbf{Z}}_t^{-m})|\leq C \sum_{j=1}^M W_p(\nu_t^j,\mu_t^j).
    \end{align*}
    Also by definition of the Wasserstein distance $W_p(\nu_t^m,\mu_t^m)^p\leq \EE[\;|W_t^m|^p\;]$. Thus, we conclude by summing over $m$ 
    \begin{align*}
        \EE\left[\sup_{r\in [0,t]}\; \sum_{m=1}^M |W_r^m|^p\right]
        \leq C\int_0^t \EE\left[\;\sum_{m=1}^M |W_r^m|^p\;\right]\diff r,
    \end{align*}
    for some constant $C>0$ depending on $T$, $\lambda$, $\sigma$, $p$ and $M$. Thus, by the Gronwall inequality and the fact that $\EE[W_0^m]=0$ we get
    \begin{align*}
        \EE\left[\sup_{t\in [0,T]}\; |W_t^m|^p\right]
        =0.
    \end{align*}
    With this we have proven the uniqueness of our solution. Thus, we conclude the proof.

    \section{Proof of Theorem \ref{theorem:meanfield_of_cbo}}
    \label{section:meanfield_limit_for_cbo}
    
    We will follow the same proof outline as \cite[Theorem 2.6]{gerber_mean-field_2024} or \cite[Theorem 3.1]{chaintron_propagation_nodate}.
    \begin{proof}[Proof of Theorem \ref{theorem:meanfield_of_cbo}]

    Let us explain the proof strategy: The goal of the proof is to devise a Gronwall type estimate between the expected particle difference $|\bX_t^{m,i}-X_t^{m,i}|^p$. For this we make use of the SDE's \eqref{eq:cbo} and \eqref{eq:mf_cbo} and bound the difference in consensus between $\dcbo^{m,N}$ and $\overline{\mu}^{m,N}$, and $\overline{\mu}^{m,N}$ and $\dmf^m$, where $\overline{\mu}^{m,N}$ the empirical distribution of \eqref{eq:mf_cbo}. We divide the proof into two steps.

        \smallskip

        \underline{\it Reduction to higher exponents.} First we show that it is sufficient to prove the statement for the case $p\in [2\vee \pM,\, \frac{q}{2}]$. Indeed, if the statement holds true in this case, then for all $r\in (0,2\vee \pM)$ we have by Jensen's inequality
        \begin{align*}
            \left(\EE\left[\sup_{t\in [0,T]}\;\abs{X_t^{m,i}- \bX_t^{m,i}}^r\right]\right)^{\frac{1}{r}}
            \leq \left(\EE\left[\sup_{t\in [0,T]}\;\abs{X_t^{m,i}- \bX_t^{m,i}}^{2\vee \pM}\right]\right)^{\frac{1}{2\vee \pM}}
            \leq C N^{-\beta}
        \end{align*}
        with
        \begin{align*}
            \beta:=\min\left\{\frac{1}{2},\; \frac{q-(2\vee \pM)}{2 (2\vee \pM)^2}\right\}
            =\min\left\{\frac{1}{2},\; \frac{q-r}{2r^2},\; \frac{q-(2\vee \pM)}{2 (2\vee \pM)^2}\right\},
        \end{align*}
        where we used $(0,q)\ni z\mapsto \frac{q-z}{z^2}\in \RR$ is decreasing.

        \smallskip
    
        \underline{\it Mean-field limit.} Let $p\in [2\vee \pM,\, \frac{q}{2}]$ be fixed. We define the particle distribution of \eqref{eq:mf_cbo} as
        \begin{align*}
            \dpmf^{m,N}:=\frac{1}{N}\sum_{i=1}^N \delta_{\bX^{m,i}}.
        \end{align*}
        Then by the Burkholder-Davis-Gundy inequality \cite[Chapter 1, Theorem 7.3]{mao_stochastic_2011} we derive the estimate
        \begin{align*}
            &\EE\left[\sup_{t\in [0,T]}\abs{X_t^{m,i}-\bX_t^{m,i}}^p\right]\\
            &\qquad\leq (2T)^{p-1}\lambda^p\int_0^T \EE\left[\abs{X_t^{m,i}-\bX_t^{m,i}+\con_\alpha^m(\dcbo_t^{m,N},\scbo_t^{-m})-\con_\alpha^m(\dmf_t^m, \smf_t^{-m})}^p\right]\,dt\\
            &\qquad\qquad+2^{p-1} T^{\frac{p}{2}-1}\sigma^p C_{\mathrm{DBG}} \int_0^T \EE\left[\abs{D(X_t^{m,i}-\con_\alpha^m(\dcbo_t^{m,N},\scbo_t^{-m}))-D(\bX_t^{m,i}-\con_\alpha^m(\dmf_t^m,\smf_t^{-m}))}^p\right]\,dt\\
            &\qquad\tag{P Diff}\label{eq:mean-field limit:particle diff}
            \leq (6T)^{p-1}\lambda^p\int_0^T \EE\left[\abs{X_t^{m,i}-\bX_t^{m,i}}^p\right]\,dt\\
            &\qquad\qquad \tag{Emp App}\label{eq:mean-field limit:empirical_approximation}
            + (6T)^{p-1}\lambda^p\int_0^T \EE\left[\abs{\con_\alpha^m(\dmf_t^m,\smf_t^{-m})-\con_\alpha^m(\dpmf_t^{m,N}, \smf_t^{-m})}^p\right]\,dt\\
            &\qquad\qquad \tag{Emp Diff}\label{eq:mean-field limit:empirical_difference}
            + (6T)^{p-1}\lambda^p\int_0^T \EE\left[\abs{\con_\alpha^m(\dpmf_t^{m,N}, \smf_t^{-m})-\con_\alpha^m(\dcbo_t^{m,N}, \scbo_t^{-m})}^p\right]\,dt\\
            &\qquad\qquad \tag{L}\label{eq:mean-field limit:lipshitz transform}
                \begin{aligned}
                    +2^{p-1} T^{\frac{p}{2}-1}\sigma^p C_{\mathrm{DBG}} \int_0^T \EE\Big[\big\vert &D(X_t^{m,i}-\con_\alpha^m(\dcbo_t^{m,N},\scbo_t^{-m}))\\
                    &\qquad-D(\bX_t^{m,i}-\con_\alpha^m(\dmf_t^m, \smf_t^{-m}))\big\vert^p\Big]\,dt.
                \end{aligned}
        \end{align*}
        The next step is to bound each term in the above inequality. First, we start with the empirical difference \eqref{eq:mean-field limit:empirical_difference}. For this we define the $\RR$-valued random variables and value
        \begin{align*}
        	\overline{Z}^{m,i}
        	:=\sup_{t\in[0,T]} |\bX_t^{m,i}|^p,\qquad
        	R_0:=\sup_{m\in [M]}\EE\left[
          \overline{Z}^{m,1}\right].
        \end{align*}
        Let $R>R_0$ be fixed, then we need to define the particle excursion set
        \begin{align*}
        	\Omega_t:=\bigcup_{m\in [M]}\left\{\omega\in\Omega\;\left\vert\;\frac{1}{N}\sum_{i=1}^N |\bX_t^{m,i}(\omega)|^p\geq R\right.\right\}.
        \end{align*}
        Additionally, by Theorem \ref{theorem:existence_and_uniqueness_for_mfcbo} we know that $\EE[|\overline{Z}^{m,i}|^{q/p}]<\infty$. As $q\geq 2p$, we have by \cite[Lemma 2.5]{gerber_mean-field_2024} that there exists a constant $C>0$ independent of $N$ such that 
        \begin{align}
            \label{eq:prob_of_excursion}
            \PP(\Omega_t)\leq C \, N^{-\frac{q}{2p}}.
        \end{align}
        Now we can split the term in \eqref{eq:mean-field limit:empirical_difference} by
        \begin{align*}
        	\EE\left[\abs{\con_\alpha^m(\dpmf_t^{m,N}, \smf_t^{-m})-\con_\alpha^m(\dcbo_t^{m,N}, \scbo_t^{-m})}^p\right]
        	&=\EE\left[\abs{\con_\alpha^m(\dpmf_t^{m,N}, \smf_t^{-m})-\con_\alpha^m(\dcbo_t^{m,N}, \scbo_t^{-m})}^p \bbone_{\Omega\setminus\Omega_t}\right]\\
        	&\qquad+\EE\left[\abs{\con_\alpha^m(\dpmf_t^{m,N}, \smf_t^{-m})-\con_\alpha^m(\dcbo_t^{m,N}, \scbo_t^{-m})}^p \bbone_{\Omega_t}\right].
        \end{align*}
        By Lemma \ref{lem:wasserstein_stability_estimate_for_weighted_mean}, we obtain
        \begin{align*}
        	\EE\left[\abs{\con_\alpha^m(\dpmf_t^{m,N}, \smf_t^{-m})-\con_\alpha^m(\dcbo_t^{m,N}, \scbo_t^{-m})}^p \bbone_{\Omega\setminus\Omega_t}\right]
        	\leq C\sum_{m=1}^M \EE\left[W_p^p(\dpmf_t^{m,N},\dcbo_t^{m,N})\right].
        \end{align*}
        Additionally, the Wasserstein difference is controlled by the particle difference by
        \begin{align*}
        	\EE\left[W_p^p(\dpmf_t^{m,N},\dcbo_t^{m,N})\right]
        	\leq \EE\left[\frac{1}{N}\sum_{i=1}^N |X_t^{m,i}-\bX_t^{m,i}|^p\right]
        	=\EE\left[|X_t^{m,1}-\bX_t^{m,1}|^p\right].
        \end{align*}
        Next, we bound the following consensus using Lemma \ref{lemma:est_mean_cost}, Remark \ref{rmk} and Theorem \ref{theorem:existence_and_uniqueness_for_mfcbo} 
        \begin{align*}
            \EE[\;|\con_\alpha^m(\dpmf_t^{m,N}, \smf_t^{-m})|^q\;]
            \leq C\EE\left[\sum_{j=1}^M\int_{\RR^d}|x|^q\diff \dpmf_t^{j,N}(x)\right]
            \leq C\sum_{j=1}^M \EE\left[\sup_{s\in [0,T]}|\bX_s^{j,1}|^q\right]
            <\infty.
        \end{align*}
        Next, we use the moment estimate from Lemma \ref{lemma:moment_estimates_for_the_empirical_measures} and the previous bound to estimate
        \begin{align*}
        	\EE[|\con_\alpha^m(\dpmf_t^{m,N}, \smf_t^{-m})-\con_\alpha^m(\dcbo_t^{m,N}, \scbo_t^{-m})|^q]
        	&\leq 2^{q-1}\big(\EE[|\con_\alpha^m(\dpmf_t^{m,N}, \smf_t^{-m})|^q]+\EE[|\con_\alpha^m(\dcbo_t^{m,N}, \scbo_t^{-m})|^q]\big)\\
        	&\leq 2^q \kappa
        \end{align*}
        for some constant $\kappa>0$ independent of $N$. Then by the Hölder inequality and \eqref{eq:prob_of_excursion}, we find
        \begin{align*}
        	&\EE[|\con_\alpha^m(\dpmf_t^{m,N}, \smf_t^{-m})-\con_\alpha^m(\dcbo_t^{m,N}, \scbo_t^{-m})|^p \bbone_{\Omega_t}]\\
        	&\qquad\leq \EE[|\con_\alpha^m(\dpmf_t^{m,N}, \smf_t^{-m})-\con_\alpha^m(\dcbo_t^{m,N}, \scbo_t^{-m})|^q]^{\frac{p}{q}}
        	\EE[\bbone_{\Omega_t}^{\frac{q}{q-p}}]^{\frac{q-p}{q}}
        	\leq 2^p C\kappa^{\frac{p}{q}}N^{-\frac{q-p}{2p}}.
        \end{align*}
        We have thus shown
        \begin{align*}
        	\EE[|\con_\alpha^m(\dpmf_t^{m,N}, \smf_t^{-m})-\con_\alpha^m(\dcbo_t^{m,N}, \scbo_t^{-m})|^p]
        	\leq CN^{-\frac{q-p}{2p}}+C\sum_{m=1}^M\EE[|X_t^{m,i}-\bX_t^{m,i}|^p].
        \end{align*}
        For the next step, we observe that by Theorem \ref{theorem:existence_and_uniqueness_for_mfcbo} there exists $R>0$ such that, for all $t\in [0,T]$ and $m\in [M]$, 
        \begin{align*}
            |\smf_t^m|
            \leq \EE\left[|\bX_t^{m,1}|^q\right]^{\frac{1}{q}}
            \leq \EE\left[\sup_{s\in [0,T]}|\bX^{m,1}|^q\right]^{\frac{1}{q}}
            \leq R.
        \end{align*}
        Thus, for the empirical approximation \eqref{eq:mean-field limit:empirical_approximation} we use lemma \ref{lemma:convergence_of_the_weighted_mean_for_iid_samples}, with $\mu=\dmf_t^m$ and $Y=\smf_t^{-m}$, to derive the bound
        \begin{align*}
        	\EE\left[\abs{\con_\alpha^m(\dmf_t^m,\smf_t^{-m})-\con_\alpha^m(\dpmf_t^{m,N}, \smf_t^{-m})}^p\right]
        	\leq CN^{-\frac{p}{2}},
        \end{align*}
        for some constant $C>0$ dependent on $M$ and $R$, but independent of $N$. Finally, using the fact that $D$ is globally Lipshitz, we can bound the term \eqref{eq:mean-field limit:lipshitz transform} analogously as we have done above. This means we derive the inequality 
        \begin{align*}
        	\EE\left[\sup_{t\in [0,T]}\abs{X_t^{m,i}-\bX_t^{m,i}}^p\right]
        	\leq CN^{-\theta p}+C\int_0^T \sum_{j=1}^M \EE\left[\sup_{s\in [0,t]}\abs{X_s^{j,i}-\bX_s^{j,i}}^p\right]\diff t
        \end{align*}
        and we define the parameter
        \begin{align*}
            \theta:=\min\left\{\frac{1}{2},\; \frac{q-p}{2p^2}\right\}
            =\min\left\{\frac{p}{2},\; \frac{q-p}{2p},\; \frac{q-(2\vee \pM)}{2 (2\vee \pM)^2}\right\},
        \end{align*}
        where we used $(0,q)\ni z\mapsto \frac{q-z}{z^2}\in \RR$ is decreasing. We take the sum over $m=1,\ldots,M$ and derive the inequality
        \begin{align*}
        	\sum_{m=1}^M\EE\left[\sup_{t\in [0,T]}\abs{X_t^{m,i}-\bX_t^{m,i}}^p\right]
        	\leq CN^{-\theta p}+C\int_0^T \sum_{m=1}^M \EE\left[\sup_{s\in [0,t]}\abs{X_s^{m,i}-\bX_s^{m,i}}^p\right]\diff t.
        \end{align*}
        The result is now a consequence of the Grönwall inequality and taking the $p$th root. With this we conclude the proof.
    \end{proof}

\section*{Acknowledgements}
HH acknowledges the support of the IMSC Young Investigator Fellowship from NAWI Graz, which has enabled his visit to the University of Oxford in 2024. JW was supported by the Engineering and Physical Sciences Research Council (grant number EP/W524311/1). HH and JW would like to thank José A. Carrillo from the university of Oxford for the useful discussions.

    \bibliography{references.bib} 
    \bibliographystyle{plain}
\end{document}